
\documentclass[onecolumn,conference,letterpaper]{IEEEtran}

\addtolength{\topmargin}{9mm}

%
%
\usepackage[utf8]{inputenc} 
\usepackage[T1]{fontenc}
\usepackage{url}
\usepackage{ifthen}
\usepackage{cite}
\usepackage[cmex10]{amsmath} 


\interdisplaylinepenalty=2500 

\hyphenation{op-tical net-works semi-conduc-tor}

\usepackage{color}
\usepackage{graphicx}
\usepackage{amssymb, amsmath, amsthm, mathtools}
\usepackage{bm}
\usepackage{comment}
\usepackage[colorlinks=true]{hyperref}

\usepackage{algorithm,algorithmic}
\newenvironment{varalgorithm}[1]
{\algorithm}
{\endalgorithm}

\hypersetup{
     colorlinks = true,
     linkcolor = [rgb]{0,0,1},
     anchorcolor = [rgb]{0,0,1},
     citecolor = [rgb]{0.9,0.5,0},
     filecolor = [rgb]{0,0,1},
     pagecolor = [rgb]{1,1,0},
     urlcolor = [rgb]{0.9,0.5,0},
     bookmarks,
        bookmarksopen = true,
        bookmarksnumbered = true,
        breaklinks = true,
        linktocpage,
        pagebackref,
        colorlinks = true,
        linkcolor = [rgb]{0.2,0.6,0.2},
        urlcolor  = [rgb]{0,0,1},
        citecolor = [rgb]{0.9,0.5,0},
        anchorcolor = [rgb]{0.2,0.6,0.2},
        hyperindex = true,
        hyperfigures
}

\newtheorem{thm}{\bf{Theorem}}[section]
\newtheorem{proposition}{\bf{Proposition}}[section]
\newtheorem{remark}{\bf{Remark}}[section]
\newcommand{\nn}{\nonumber}
\newcommand{\avertot}{average~ total}

\def\lb{\mathop{\mathrm{LB}}}   


\begin{document}
\title{Quadratic Signaling Games with Channel Combining Ratio} 

 \author{%
   \IEEEauthorblockN{Serkan~Sar{\i}ta\c{s}, Photios~A.~Stavrou, Ragnar~Thobaben and Mikael~Skoglund}
   \IEEEauthorblockA{Division of Information Science and Engineering\\
   KTH Royal Institute of Technology\\
   SE-10044, Stockholm, Sweden\\
   Email: \{saritas, fstavrou, ragnart, skoglund\}@kth.se}
 }

\maketitle

\begin{abstract}
In this study, Nash and Stackelberg equilibria of single-stage and multi-stage quadratic signaling games between an encoder and a decoder are investigated. In the considered setup, the objective functions of the encoder and the decoder are misaligned, there is a noisy channel between the encoder and the decoder, the encoder has a soft power constraint, and the decoder has also noisy observation of the source to be estimated. We show that there exist only linear encoding and decoding strategies at the Stackelberg equilibrium, and derive the equilibrium strategies and costs. Regarding the Nash equilibrium, we explicitly characterize affine equilibria for the single-stage setup and show that the optimal encoder (resp. decoder) is affine for an affine decoder (resp. encoder) for the multi-stage setup. For the decoder side, between the information coming from the encoder and noisy observation of the source, our results describe what should be the combining ratio of these two channels. Regarding the encoder, we derive the conditions under which it is meaningful to transmit a message.
\end{abstract}


\section{Introduction}

Decision making has a wide-range of applications, from  engineering areas (e.g. information and communication theories, control theory, machine learning etc.) to social sciences (e.g. economics, management etc.) to interdisciplinary sciences (e.g. cognitive science). Every decision mechanism requires some prior input/data or observation for the decision maker (DM) so that an optimal decision can be made. The question may arise if, for instance, there are multiple observations corresponding to the same data, are all the inputs reliable, which observation is the most usable one etc. In this paper, we search for an answer to these questions with two observation channels under a game theoretic framework.

Consider a scenario with two DMs, an encoder and a decoder. The encoder has access to the data and transmits a message to the decoder over a noisy channel. Besides information coming from the encoder, the decoder has also access to a noisy observation of the original data. Based on these two observations/inputs, the decoder takes its optimal action. Here, the encoder and the decoder are assumed to have misaligned objective functions, which makes the setting a game theoretic setup. In the following, we make further explanations and comments:
\begin{itemize}
	\item Our setup can be considered as a signaling game: A privately informed sender (i.e., encoder) observes the private data and chooses a signal that is observed by the (uninformed) receiver (i.e., decoder). Upon receiving the message from the encoder, the decoder picks an action, which determines the costs\footnote{If the transmitted signal does not affect the costs, the game is called as \textit{cheap talk}.} of the encoder and the decoder. 
	\item From the decoder's perspective, there are two information sources: a noisy observation of the encoder's message and of the original data. The considered question is then, which conditions dictate channels combining\footnote{The decoder utilizes the convex combination of the channels, and uses restricted gain coefficients for the utilization of channels (e.g. due to power constraint), thus our setup is not equivalent to the case of parallel Gaussian channels, and it may not achieve maximum-ratio combining (see Remark~\ref{rem:commentDecoder}).} and what should be their respective ratio of utilization.
	\item The setup can also be considered as a point-to-point communication setup with (specific type of) side information\footnote{Since the decoder cannot adjust the gains of the main and side channels separately, our setup is not completely equivalent to the point-to-point communication setup with side information at the decoder.} at the decoder.
\end{itemize} 

\subsection{Motivational Example}

When satellite navigation such as GPS is inadequate due to various reasons (e.g., signal loses significant power indoors, multiple reflections may cause multi-path propagation or acquiring a satellite fix may take too long), additional information such as Wi-Fi positioning systems and indoor positioning systems can be utilized. As a solution to this problem, i.e., in order to make positioning signals ubiquitous, integration between satellite navigation and indoor positioning can be made. Accordingly, our setup can model such a scenario: Actual location is to be estimated by the user, and satellite navigation, which contains the location information, can be considered as the original data. An analogous of the encoder is the other positioning systems, which transmits location related information to the user. Even though there is a direct noisy channel between the original data (actual location) and the user due to satellite navigation, more precise location estimate can be achieved by utilizing additional information coming from the other positioning systems. 

\subsection{Related Literature}

The studies on cheap talk and signaling games are initiated by Crawford and Sobel in \cite{SignalingGames}, who showed that under some technical conditions on the objective functions of the players, the cheap talk problem only admits quantized Nash equilibrium strategies. Signaling games have many applications in networked systems \cite{misBehavingAgents,csLloydMax}, recommendation systems \cite{miklos2013value,recommSystemGame}, and economics \cite{signalSurvey,Sobel2009}. 

Starting with a seminal work \cite{bayesianPersuasion}, there are many studies that consider the Stackelberg equilibrium of signaling games \cite{tacWorkSerkan,CedricWork,akyolITapproachGame,omerHierarchial,dynamicGameSerkan,strategicCommSideInfo,serkanACC2020}. Many of these works assume that the non-alignment between the objective functions of the encoder and the decoder is a function of a Gaussian random variable (RV) correlated with the Gaussian source and secret to the decoder (unlike the original case where it is fixed and commonly known by the encoder and the decoder \cite{SignalingGames}, which is also studied in \cite{tacWorkSerkan,dynamicGameSerkan,serkanACC2020} and in this paper), the Stackelberg equilibrium under quadratic costs is investigated in \cite{CedricWork, akyolITapproachGame,omerHierarchial}. We refer \cite{Sobel2009,tacWorkSerkan,dynamicGameSerkan} for more discussion on the literature and some extensions (including Nash equilibrium analyses and multi-stage extensions) on cheap talk and signaling games. 

An information theoretic formulation of the Bayesian persuasion problem \cite{bayesianPersuasion} is studied in \cite{akyolITapproachGame} for general (not necessarily Gaussian) sources, including the case with side information at the decoder, and recently also in \cite{strategicCommSideInfo} with finite state and action spaces by assuming a decoder side information, respectively. In \cite{noisyWynerZiv}, lossy source coding with side information at the decoder only, known as the Wyner-Ziv coding, is studied in which the source is observed via a memoryless noisy channel. 

Similar to our setup, the Bayesian Nash equilibrium of a finite alphabet semantic communication game is investigated in \cite{semCommGame}. Besides the encoder/decoder pair acting as a team, there is also an (helpful or adversarial) agent who is able to modify the  channel transition probability of the side information received by the decoder. In \cite{fadingSideInfo}, a similar setup is considered in which the decoder, besides receiving the message from the encoder over a noiseless channel, also observes side information consisting of the original source subject to slow fading and noise. The source coding analysis in \cite{fadingSideInfo} is extended to the joint source/channel coding analysis by assuming a noisy but static channel between the encoder and the decoder \cite{jointSourceChannelCodingSide}.

\subsection{Contributions}

The main contributions of this paper can be summarized
as follows:
\begin{itemize}
	\item[(i)] A signaling game between an encoder and a decoder with quadratic objective functions is modeled with channel combining and utilization at the decoder side. 
	\item[(ii)] Nash and Stackelberg equilibria of the single-stage and multi-stage setups are investigated, and the equilibrium strategies and costs are characterized.
	\item[(iii)] The optimality of linear strategies is proved for the single-stage Stackelberg equilibrium (Theorem~\ref{thm:linearEq}).
	\item[(iv)] For the Stackelberg equilibrium of the multi-stage setup, it is proved that the linear strategies are optimal for both the encoder and the decoder (Theorem~\ref{theorem:enc_sol_soft}), and an algorithm is provided to find the equililbrium (Algorithm~\ref{algo2}).
	\item[(v)] For the Nash equilibrium of the single-stage and multi-stage setups, it is proved that the optimal encoder (decoder) is affine for an affine decoder (encoder) (Theorem~\ref{thm:affineNash}).
\end{itemize}


The remainder of the paper is organized as follows. We present the system model and problem formulation in Section II. Stackelberg equilibria with single-stage and multi-stage are investigated in Section III and Section IV, respectively. In Section V, we analyze Nash equilibria. Section VI concludes the paper and discusses future research directions.

\noindent\textbf{Notations:} $\mathcal{N}(\mu,\sigma^2)$ denotes a scalar Gaussian distribution with mean $\mu$ and variance $\sigma^2$, and we denote random variables by bold lower case letters, e.g., ${\bf x}$.

\section{System Model and Problem Formulation}

\subsection{System Model}

For the purpose of illustration, the considered system model is depicted in Fig.~\ref{figure:combinedSide}. An informed player (encoder) observes the realization of the scalar Gaussian RV ${\bf x}\sim\mathcal{N}(0,\sigma_{\bf x}^2)$ and transmits a message ${\bf m}$ to the uninformed player (decoder) through the additive white Gaussian noise channel (AWGN). The noise ${\bf v}$ is modeled as ${\bf v}\sim\mathcal{N}(0,\sigma_{\bf v}^2)$, and the output ${\bf y}$ of the channel is ${\bf y}={\bf m}+{\bf v}$. Besides the noisy message from the encoder, the decoder has also an access to the source over an AWGN channel, i.e., the decoder can also observe ${\bf z}={\bf x}+{\bf w}$ with ${\bf w}\sim\mathcal{N}(0,\sigma_{\bf w}^2)$. The decoder can choose to observe either of the channels or combination of them (e.g., by using a time-sharing approach). In particular, letting $\alpha\in[0,1]$, the combining\&utilization ratio of the channel from the encoder is $\alpha$ whereas of the channel from the source is $1-\alpha$, i.e.\footnote{Combining\&utilization of channels can be interpreted as channel gains of $\alpha$ and $1-\alpha$. Modifying the channels' gains as $\alpha_1\in\mathbb{R}$ and $\alpha_2\in\mathbb{R}$ results in infinitely many decoder strategies and maximum-ratio combining (see Remark~\ref{rem:commentDecoder}).}, ${\bf r}=\alpha {\bf y} + (1-\alpha){\bf z}$. The decoder, upon observing its input ${\bf r}$, generates an estimate ${\bf\hat{x}}$ of the original source ${\bf x}$.  

\begin{figure} [ht]
	\begin{center}
		\includegraphics[width=0.6\columnwidth]{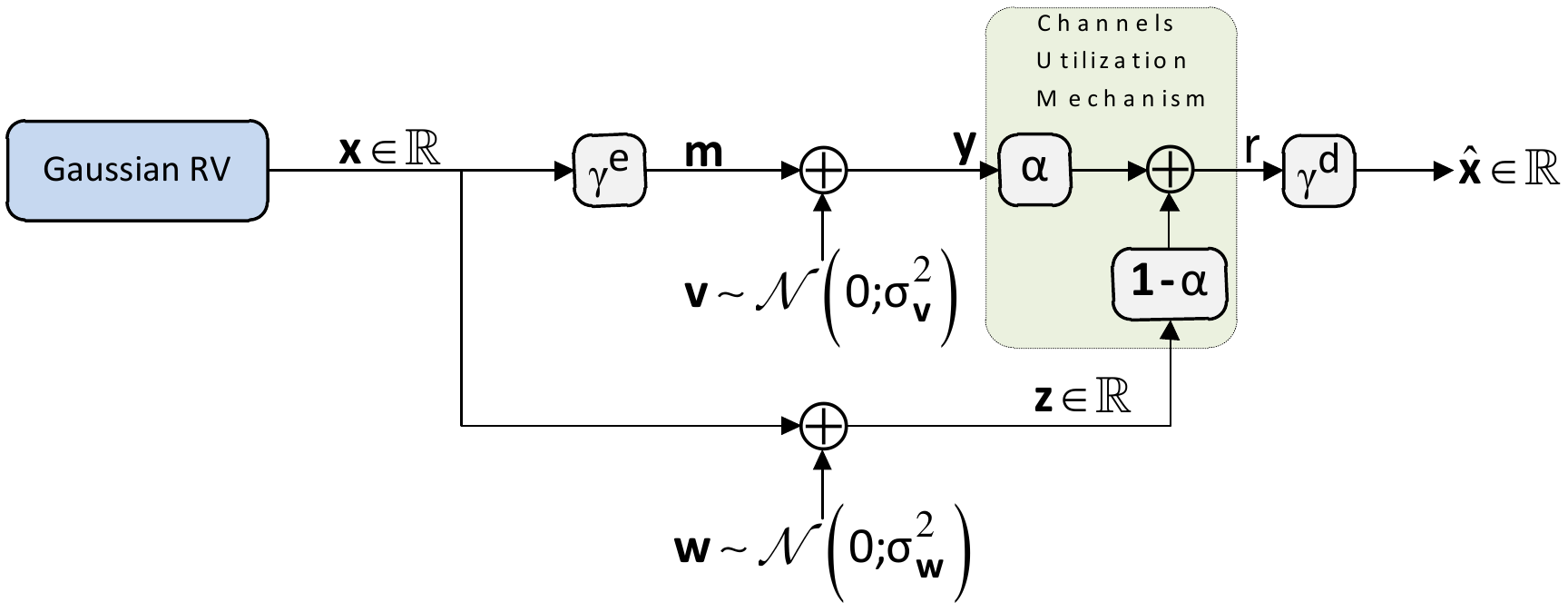}
	\end{center}
	\caption{Single-stage system model.}
	\label{figure:combinedSide}
\end{figure}

\subsection{Preliminaries}

For the source realization $x$ and the decoder estimate $\hat{x}$, let $c^e(x,\hat{x})$ and $c^d(x,\hat{x})$ denote the corresponding cost functions of the encoder and the decoder, respectively. Then, for the given encoder strategy ${\bf m}=\gamma^e({\bf x})$ and the decoder strategy $\hat{{\bf x}}=\gamma^d({\bf r})$, the expected encoder and the decoder costs are $J^e\left(\gamma^e,\gamma^d\right) = {\bf E}\left[c^e({\bf x},\hat{{\bf x}})\right]$ and $J^d\left(\gamma^e,\gamma^d\right) = {\bf E}\left[c^d({\bf x},\hat{{\bf x}})\right]$, respectively. Since the costs are not (essentially) equivalent/aligned, the problem is studied under a game theoretic framework, and two equilibrium types are investigated: Stackelberg and Nash equilibria.

In the Stackelberg (leader-follower) game, the leader (encoder) commits to a particular policy and announces it to the follower (decoder). Upon observing the encoder's committed strategy, the decoder takes its optimal action. More precisely, a pair of strategies $(\gamma^{e,*}, \gamma^{d,*})$ is said to be a {\it Stackelberg equilibrium} \cite{basols99} if
\begin{align}
\begin{split}
&J^e(\gamma^{e,*}, \gamma^{d,*}(\gamma^{e,*})) \leq J^e(\gamma^e, \gamma^{d,*}(\gamma^e)) \quad \forall \gamma^e \in \Gamma^e \,,\\
&\hspace{-0.5cm} \text{where } \gamma^{d,*}(\gamma^e) \text{ satisfies} \\
&J^d(\gamma^{e}, \gamma^{d,*}(\gamma^{e})) \leq J^d(\gamma^{e}, \gamma^d(\gamma^{e})) \quad \forall \gamma^d \in \Gamma^d  \,.
\label{eq:stackelbergEquilibrium}
\end{split}
\end{align}
Note that the follower (decoder) takes its action after observing the strategy $\gamma^{e}$ of the leader (encoder), the strategy $\gamma^d(\gamma^{e})$ of the decoder is a function of $\gamma^{e}$.

In the Nash (simultaneous-move) game, the encoder and the decoder announce their strategies at the same time. More precisely, a pair of policies $(\gamma^{e,*}, \gamma^{d,*})$ is said to be a {\it Nash equilibrium} \cite{basols99} if
\begin{align}
\begin{split}
J^e(\gamma^{e,*}, \gamma^{d,*}) &\leq J^e(\gamma^{e}, \gamma^{d,*}) \quad \forall \gamma^e \in \Gamma^e \,,\\
J^d(\gamma^{e,*}, \gamma^{d,*}) &\leq J^d(\gamma^{e,*}, \gamma^{d}) \quad \forall \gamma^d \in \Gamma^d \,.
\label{eq:nashEquilibrium}
\end{split}
\end{align}
As observed in \eqref{eq:nashEquilibrium}, none of the players prefers to change their optimal strategies at the equilibrium, i.e., there is no unilateral profitable deviation from any of the players. 

\subsection{Problem Formulation}

We consider quadratic cost functions with a soft power constraint at the encoder side. In particular, $c^e(x,\hat{x})=(x-\hat{x}-b)^2+\theta (\gamma^e(x))^2$ and $c^d(x,\hat{x})=(x-\hat{x})^2$, where $b$ denotes the bias term commonly known by the encoder and the decoder, i.e., the misalignment between the encoder and the decoder costs, and $\theta$ is a coefficient responsible for the soft power constraint. Note that the costs simply reduce to those for a minimum mean-square estimation (MMSE) problem when $b=0$. Further note that the case with $\theta=0$ corresponds to the setup with no power constraint at the encoder.

The encoder aims to minimize $J^e\left(\gamma^e,\gamma^d\right) = {\bf{E}}\left[c^e({\bf x},\hat{{\bf x}})\right]$ by selecting an optimal encoding strategy $\gamma^e({\bf x})$ whereas the decoder's goal is to minimize $J^d\left(\gamma^e,\gamma^d\right) = {\bf{E}}\left[c^d({\bf x},\hat{{\bf x}})\right]$ by choosing an optimal decoding strategy $\gamma^d({\bf r})$ and the channel combining parameter $\alpha$.

\section{Single-Stage Stackelberg Equilibrium}\label{sec:staticStackelberg}

In this section, we analyze the Stackelberg equilibrium of the game between the encoder (leader) and the decoder (follower). First, we show that the lowest estimation error is achieved when the encoder and the decoder jointly use linear strategies. Then we characterize the (existence of) equilibria with respect to the soft power coefficient $\theta$.

\begin{thm}\label{thm:affineDecoder}
	Let the encoder use a linear strategy such that ${\bf m}=\gamma^e({\bf x})=A{\bf x}$. Then, the optimal decoder selects the channel combining parameter $\alpha$ and the linear strategy $\hat{{\bf x}}=\gamma^d({\bf r})=K{\bf r}$ correspondingly. The optimal decoder strategy $\alpha^*$ and $K^*$, and its corresponding cost $J^{d,*} = {\bf E}[({\bf x}-\hat{{\bf x}})^2]$ are characterized in Table~\ref{tab:optDecoderCombined}.
	\begin{table}[ht]
		\caption{Optimal decoder strategy for a linear encoder.}
		\label{tab:optDecoderCombined}
		\centering
		\begin{tabular}{|c|c|c|c|}
			\hline
			\text{Case} & $\alpha^*$ & $K^*$ & $J^{d,*}$  \\ \hline &&&\\[-1em]
			$A \geq 0$	& ${A\sigma_{\bf w}^2 \over A\sigma_{\bf w}^2+\sigma_{\bf v}^2}$ & ${A\sigma_{\bf x}^2\sigma_{\bf w}^2 + \sigma_{\bf x}^2\sigma_{\bf v}^2\over A^2\sigma_{\bf x}^2\sigma_{\bf w}^2+\sigma_{\bf x}^2\sigma_{\bf v}^2+\sigma_{\bf w}^2\sigma_{\bf v}^2}$ & ${\sigma_{\bf x}^2\sigma_{\bf w}^2\sigma_{\bf v}^2 \over (A^2\sigma_{\bf w}^2+\sigma_{\bf v}^2)\sigma_{\bf x}^2+\sigma_{\bf w}^2\sigma_{\bf v}^2}$  \\ \hline &&&\\[-1em]
			$-\sqrt{\sigma_{\bf v}^2\over\sigma_{\bf w}^2}\leq A\leq0$	& $0$ & ${\sigma_{\bf x}^2 \over \sigma_{\bf x}^2+\sigma_{\bf w}^2}$ & ${\sigma_{\bf x}^2\sigma_{\bf w}^2 \over \sigma_{\bf x}^2+\sigma_{\bf w}^2}$ \\ \hline &&&\\[-1em]
			$A\leq-\sqrt{\sigma_{\bf v}^2\over\sigma_{\bf w}^2}$	& $1$ & ${A\sigma_{\bf x}^2 \over A^2\sigma_{\bf x}^2+\sigma_{\bf v}^2}$ & ${\sigma_{\bf x}^2\sigma_{\bf v}^2 \over A^2\sigma_{\bf x}^2+\sigma_{\bf v}^2}$  \\ \hline
		\end{tabular}%
	\end{table}
\end{thm}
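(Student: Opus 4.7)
The plan is to reduce the decoder's problem to a constrained linear MMSE (LMMSE) estimation of ${\bf x}$ from the jointly Gaussian pair $({\bf y},{\bf z})$. Substituting ${\bf r}=\alpha{\bf y}+(1-\alpha){\bf z}$ into $\hat{{\bf x}}=K{\bf r}$ recasts the decoder as $\hat{{\bf x}}=a{\bf y}+b{\bf z}$ with $a=K\alpha$ and $b=K(1-\alpha)$. The key observation is that as $(\alpha,K)$ ranges over $[0,1]\times\mathbb{R}$, the induced pairs $(a,b)$ fill precisely the set $\{(a,b):ab\geq 0\}$, namely the union of the closed first and third quadrants, and the map is invertible through $K=a+b$, $\alpha=a/(a+b)$ whenever $a+b\neq 0$. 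The problem thus reduces to minimizing ${\bf E}[({\bf x}-a{\bf y}-b{\bf z})^{2}]$ over this (non-convex) feasible set.

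Next I would solve the unconstrained LMMSE problem by the orthogonality principle applied to ${\bf y}=A{\bf x}+{\bf v}$ and ${\bf z}={\bf x}+{\bf w}$, obtaining $a^{*}=A\sigma_{{\bf x}}^{2}\sigma_{{\bf w}}^{2}/\Delta$ and $b^{*}=\sigma_{{\bf x}}^{2}\sigma_{{\bf v}}^{2}/\Delta$ with $\Delta=A^{2}\sigma_{{\bf x}}^{2}\sigma_{{\bf w}}^{2}+\sigma_{{\bf x}}^{2}\sigma_{{\bf v}}^{2}+\sigma_{{\bf v}}^{2}\sigma_{{\bf w}}^{2}>0$. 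Since $b^{*}>0$ always, the sign of $a^{*}$ matches the sign of $A$. For $A\geq 0$, $(a^{*},b^{*})$ lies in the first quadrant and is feasible; translating back via $K^{*}=a^{*}+b^{*}$ and $\alpha^{*}=a^{*}/(a^{*}+b^{*})$ gives the first row of the table, with cost $J^{d,*}=\sigma_{{\bf x}}^{2}(1-a^{*}A-b^{*})=\sigma_{{\bf x}}^{2}\sigma_{{\bf v}}^{2}\sigma_{{\bf w}}^{2}/\Delta$.

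For $A<0$ the unconstrained optimum lies in the fourth quadrant and is infeasible. Because $J^{d}$ is a strictly convex quadratic in $(a,b)$ and each of the two quadrants is convex, the constrained minimum over each quadrant is attained on the axis bordering the fourth quadrant, not in the interior. Thus the minimum over the first quadrant is at $a=0$, which corresponds to $\alpha^{*}=0$ and the LMMSE of ${\bf x}$ from ${\bf z}$ alone with residual cost $\sigma_{{\bf x}}^{2}\sigma_{{\bf w}}^{2}/(\sigma_{{\bf x}}^{2}+\sigma_{{\bf w}}^{2})$, while the minimum over the third quadrant is at $b=0$, corresponding to $\alpha^{*}=1$ and the LMMSE of ${\bf x}$ from ${\bf y}$ alone with residual cost $\sigma_{{\bf x}}^{2}\sigma_{{\bf v}}^{2}/(A^{2}\sigma_{{\bf x}}^{2}+\sigma_{{\bf v}}^{2})$. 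A direct comparison of the two residuals reduces to $A^{2}\sigma_{{\bf w}}^{2}$ versus $\sigma_{{\bf v}}^{2}$ and yields the threshold $|A|=\sqrt{\sigma_{{\bf v}}^{2}/\sigma_{{\bf w}}^{2}}$ separating rows two and three.

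The main obstacle, I expect, is not the algebra but cleanly handling the non-convex feasible set in the $A<0$ case: one must rule out interior critical points in each quadrant and verify that the correct active constraint is the axis adjacent to the fourth quadrant. Both conclusions follow from strict convexity together with the signs of $\partial J^{d}/\partial a$ at $(0,b^{**})$ and $\partial J^{d}/\partial b$ at $(a^{**},0)$, each of which points toward the infeasible fourth quadrant whenever $A<0$; everything else is straightforward LMMSE computation.
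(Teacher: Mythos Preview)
Your proof is correct and follows essentially the same route as the paper: both reparametrize via $(a,b)=(K\alpha,K(1-\alpha))$ (the paper writes $(t,u)$), establish convexity of $J^d$ in these variables via the Hessian, solve the unconstrained problem, and for $A<0$ reduce to comparing the single-channel solutions $\alpha=0$ and $\alpha=1$; if anything, your boundary analysis of the $A<0$ case is more carefully justified than the paper's, which simply appeals to comparing the endpoints of the interval $[0,1]$. One cosmetic slip: for $A<0$ the unconstrained optimum has $a^{*}<0,\ b^{*}>0$, which is the \emph{second} quadrant in standard labeling, not the fourth---but your identification of the active axes ($a=0$ on the first quadrant, $b=0$ on the third) and the resulting threshold are correct regardless.
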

\IEEEproof
See Appendix~\ref{appendixaffineDecoder}.
\endIEEEproof

\begin{remark}\label{rem:commentDecoder}
	As it can be observed from Table~\ref{tab:optDecoderCombined}, the optimal decoder achieves maximum-ratio combining by randomizing the channels when $A>0$. However, when $A<0$, since the decoder's action space does not support maximum-ratio combining, the decoder always selects the better channel without randomization.
\end{remark}

\begin{thm}\label{thm:linearOpt}
	The lower bound on the estimation error $J^d=\bf{E}[({\bf x}-\hat{{\bf x}})^2]$ is $\frac{\sigma_{\bf x}^2}{{P\over\sigma_{\bf v}^2}+{\sigma_{\bf x}^2\over\sigma_{\bf w}^2}+1}$ where $P\triangleq\bf{E}[{\bf m}^2]$ is the power of the transmitted signal ${\bf m}=\gamma^e({\bf x})$ by the encoder, and this lower bound is achieved if and only if both the encoder and the decoder jointly use linear strategies.
\end{thm}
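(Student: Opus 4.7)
The plan is to establish matching achievability and converse bounds on $J^d$, and then trace the inequalities to read off the equality conditions.

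For achievability, I would specialize Theorem~\ref{thm:affineDecoder} to the linear encoder $\mathbf{m}=A\mathbf{x}$ with $A=\sqrt{P/\sigma_{\bf x}^2}>0$, so that $\mathbf{E}[\mathbf{m}^2]=P$. The first row of Table~\ref{tab:optDecoderCombined} then gives $J^{d,*}=\sigma_{\bf x}^2\sigma_{\bf w}^2\sigma_{\bf v}^2/((A^2\sigma_{\bf w}^2+\sigma_{\bf v}^2)\sigma_{\bf x}^2+\sigma_{\bf w}^2\sigma_{\bf v}^2)$, which, after dividing numerator and denominator by $\sigma_{\bf v}^2\sigma_{\bf w}^2$, equals $\sigma_{\bf x}^2/(P/\sigma_{\bf v}^2+\sigma_{\bf x}^2/\sigma_{\bf w}^2+1)$. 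The decoder in that row is linear, so this establishes the ``if'' direction: jointly linear strategies attain the bound.

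For the converse, the plan is an information-theoretic argument. Rewrite the decoder's input as
\[
\mathbf{r}=\alpha\mathbf{m}+(1-\alpha)\mathbf{x}+\mathbf{n},\qquad \mathbf{n}\triangleq\alpha\mathbf{v}+(1-\alpha)\mathbf{w}\sim\mathcal{N}(0,\sigma_n^2),
\]
with $\sigma_n^2=\alpha^2\sigma_{\bf v}^2+(1-\alpha)^2\sigma_{\bf w}^2$ and $\mathbf{n}$ independent of $(\mathbf{x},\mathbf{m})$. Because $\mathbf{m}$ is a deterministic function of $\mathbf{x}$, $h(\mathbf{r}\mid\mathbf{x})=h(\mathbf{n})=\tfrac{1}{2}\log(2\pi e\sigma_n^2)$, so the Gaussian maximum-entropy bound yields $I(\mathbf{x};\mathbf{r})\leq\tfrac{1}{2}\log(\mathrm{var}(\mathbf{r})/\sigma_n^2)$. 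I would then apply Cauchy-Schwarz twice: first $|\mathbf{E}[\mathbf{m}\mathbf{x}]|\leq\sqrt{P\sigma_{\bf x}^2}$ gives $\mathrm{var}(\mathbf{r})\leq(\alpha\sqrt{P}+(1-\alpha)\sigma_{\bf x})^2+\sigma_n^2$, and second, viewing $\alpha\sqrt{P}+(1-\alpha)\sigma_{\bf x}$ as the inner product of $(\alpha\sigma_{\bf v},(1-\alpha)\sigma_{\bf w})$ with $(\sqrt{P}/\sigma_{\bf v},\sigma_{\bf x}/\sigma_{\bf w})$,
\[
\bigl(\alpha\sqrt{P}+(1-\alpha)\sigma_{\bf x}\bigr)^2\leq \sigma_n^2\bigl(P/\sigma_{\bf v}^2+\sigma_{\bf x}^2/\sigma_{\bf w}^2\bigr).
\]
Combining, $I(\mathbf{x};\mathbf{r})\leq \tfrac{1}{2}\log\bigl(1+P/\sigma_{\bf v}^2+\sigma_{\bf x}^2/\sigma_{\bf w}^2\bigr)$. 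Since $\hat{\mathbf{x}}=\gamma^d(\mathbf{r})$, data processing gives $I(\mathbf{x};\hat{\mathbf{x}})\leq I(\mathbf{x};\mathbf{r})$, and the Gaussian rate-distortion lower bound $I(\mathbf{x};\hat{\mathbf{x}})\geq\tfrac{1}{2}\log(\sigma_{\bf x}^2/J^d)$ closes the loop to give $J^d\geq \sigma_{\bf x}^2/(1+P/\sigma_{\bf v}^2+\sigma_{\bf x}^2/\sigma_{\bf w}^2)$.

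For the ``only if'' part, I would read off the equality conditions inequality by inequality: (i) tightness of maximum-entropy forces $\mathbf{r}$ to be Gaussian; (ii) $|\mathbf{E}[\mathbf{m}\mathbf{x}]|=\sqrt{P\sigma_{\bf x}^2}$ forces $\mathbf{m}=c\mathbf{x}$, i.e., the encoder is linear; (iii) equality in the second Cauchy-Schwarz requires the two vectors to be proportional, which pins $\alpha$ to the value given in Theorem~\ref{thm:affineDecoder}; and (iv) tightness of Gaussian rate-distortion requires the estimator to have the jointly Gaussian MMSE form, which for a linear encoder means that $\gamma^d$ is itself linear. Hence both the encoder and the decoder must be linear at equality.

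The main obstacle I anticipate is obtaining the sum form $1+P/\sigma_{\bf v}^2+\sigma_{\bf x}^2/\sigma_{\bf w}^2$ rather than the loose product form $(1+P/\sigma_{\bf v}^2)(1+\sigma_{\bf x}^2/\sigma_{\bf w}^2)$: the naive chain $I(\mathbf{x};\mathbf{r})\leq I(\mathbf{x};\mathbf{y},\mathbf{z})\leq I(\mathbf{x};\mathbf{y})+I(\mathbf{x};\mathbf{z})$ gives the product, which is strictly weaker than the theorem requires and is not attained with equality by the linear solution. Bounding $I(\mathbf{x};\mathbf{r})$ directly through the scalar Gaussian structure of $\mathbf{r}$, together with the tailored double Cauchy-Schwarz chain above, is precisely what yields the tight sum form and, at equality, recovers the linear encoder and the specific $\alpha$ of Theorem~\ref{thm:affineDecoder}.
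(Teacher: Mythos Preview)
Your proposal is correct and follows essentially the same route as the paper: both arguments upper-bound $I(\mathbf{x};\mathbf{r})$ via the Gaussian maximum-entropy/capacity expression, apply Cauchy--Schwarz to replace $\mathbf{E}[\mathbf{m}\mathbf{x}]$ by $\sqrt{P\sigma_{\mathbf{x}}^2}$, and then use the inequality $(\sqrt{a}+\sqrt{b})^2/(c+d)\le a/c+b/d$ (which you recognize as a second Cauchy--Schwarz) to obtain the sum form $1+P/\sigma_{\mathbf{v}}^2+\sigma_{\mathbf{x}}^2/\sigma_{\mathbf{w}}^2$, before closing with the Shannon/rate-distortion lower bound and tracing the equality conditions. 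The only cosmetic differences are that the paper works directly with $I(\mathbf{x};\mathbf{r})\ge \tfrac12\log(\sigma_{\mathbf{x}}^2/J^d)$ rather than inserting the data-processing step $I(\mathbf{x};\hat{\mathbf{x}})\le I(\mathbf{x};\mathbf{r})$, and it proves the second inequality by direct algebra rather than by invoking Cauchy--Schwarz on the vectors $(\alpha\sigma_{\mathbf{v}},(1-\alpha)\sigma_{\mathbf{w}})$ and $(\sqrt{P}/\sigma_{\mathbf{v}},\sigma_{\mathbf{x}}/\sigma_{\mathbf{w}})$.
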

\IEEEproof
See Appendix~\ref{appendixlinearOpt}.
\endIEEEproof

Regarding the encoder cost, observe the following\footnote{Since we assume a fixed and public $b$ in contrast to a private and random $b$ which is correlated with the source as in \cite{CedricWork, akyolITapproachGame,omerHierarchial}, the results obtained in the former setup cannot be applied directly to the latter one; i.e., the Stackelberg equilibria of these two setups are different.}.
\begin{remark}\label{rem:encCost}
	Due to the Stackelberg assumption, since the encoder anticipates that the decoder will use $\hat{{\bf x}}=\gamma^{d,*}({\bf r})=\bf{E}[{\bf x}|{\bf r}]$, the bias $b$ can be decoupled from the encoder cost \cite{tacWorkSerkan,dynamicGameSerkan}. In particular,
	\begin{align*}
	J^e &= {\bf{E}} [({\bf x}-{\bf{E}}[{\bf x}|{\bf r}]-b)^2+\theta (\gamma^e({\bf x}))^2] \nn\\
	&= {\bf{E}} [({\bf x}-{\bf{E}}[{\bf x}|{\bf r}])^2+\theta (\gamma^e({\bf x}))^2]+b^2 \nn\\
	&= J^d+{\bf{E}} [\theta (\gamma^e({\bf x}))^2]+b^2 \,.
	\end{align*}
\end{remark}

After finding the optimal decoder cost, we can proceed to analyze the optimum encoder strategy and characterize the (existence of) equilibria.
\begin{thm}\label{thm:linearEq}
The only equilibrium (affine or not) in the Stackelberg setup is the linear equilibrium with $\gamma^e({\bf x})=A{\bf x}$ and $\gamma^d({\bf r})=K{\bf r}$ with $A\geq0$ and $K\geq0$. \newline
In particular, at the equilibrium, the encoder cost is
\[J^{e,*} = J^{d,*} + \theta A^2\sigma_{\bf x}^2 + b^2 \,,\]
where $A$ is decided according to the following decision rule
\begin{align*}
A^2=\begin{cases}
{\sqrt{\sigma_{\bf v}^2\over\theta{\sigma_{\bf x}^2}}-{\sigma_{\bf v}^2\over\sigma_{\bf x}^2}\left({{\sigma_{\bf x}^2\over\sigma_{\bf w}^2}+1}\right)},~&\mbox{if}\qquad~\mbox{$\theta<{{\sigma_{\bf x}^2} \over \sigma_{\bf v}^2\left({\sigma_{\bf x}^2\over\sigma_{\bf w}^2}+1\right)^2}$}\\
0,~&\mbox{if}\qquad~\mbox{$\theta\geq{{\sigma_{\bf x}^2} \over \sigma_{\bf v}^2\left({\sigma_{\bf x}^2\over\sigma_{\bf w}^2}+1\right)^2}$}\\
\end{cases}\,.
\end{align*}
Then, the corresponding $\alpha$, $K$, and $J^{d,*}$ can be derived from Table~\ref{tab:optDecoderCombined}.
\end{thm}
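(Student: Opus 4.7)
The plan is to reduce the encoder's Stackelberg optimization to a scalar convex problem in $A^2 \geq 0$ by combining Theorem~\ref{thm:linearOpt} with Remark~\ref{rem:encCost}.

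First, I would argue that any Stackelberg-optimal encoder must be linear. By Remark~\ref{rem:encCost}, the best-response encoder cost splits as $J^e = J^d(\gamma^e,\gamma^{d,*}(\gamma^e)) + \theta\,{\bf E}[(\gamma^e({\bf x}))^2] + b^2$, and for any strategy with transmit power $P = {\bf E}[(\gamma^e({\bf x}))^2]$ Theorem~\ref{thm:linearOpt} lower-bounds the induced $J^d$ by $\sigma_{\bf x}^2/(P/\sigma_{\bf v}^2 + \sigma_{\bf x}^2/\sigma_{\bf w}^2 + 1)$, with equality only under jointly linear strategies. The power penalty depends only on $P$, so for any candidate $P$ a linear encoder achieves the minimum of both terms simultaneously; hence the Stackelberg optimum lies in the family $\gamma^e({\bf x})=A{\bf x}$ with $P=A^2\sigma_{\bf x}^2$, reducing the search to $A\in\mathbb{R}$.

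Second, I would restrict to $A\geq 0$ by a direct comparison of the three rows of Table~\ref{tab:optDecoderCombined}. Since the power cost $\theta A^2\sigma_{\bf x}^2$ is invariant under $A\mapsto -A$, it suffices to show $J^{d,*}(|A|)<J^{d,*}(-|A|)$ for $A\neq 0$. For $A\in[-\sqrt{\sigma_{\bf v}^2/\sigma_{\bf w}^2},0)$ the second-row value equals the first-row value at $A=0$, and the first-row denominator strictly increases in $A^2$, so the first row at $|A|$ is strictly smaller. For $A\leq -\sqrt{\sigma_{\bf v}^2/\sigma_{\bf w}^2}$, after clearing fractions the third-row denominator is the first-row denominator minus $\sigma_{\bf x}^2\sigma_{\bf v}^2$, giving the strict inequality again. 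The underlying reason is the one flagged in Remark~\ref{rem:commentDecoder}: the constraint $\alpha\in[0,1]$ prevents the decoder from flipping the sign of a negative-$A$ message.

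With $A\geq 0$ fixed, substituting the first row of Table~\ref{tab:optDecoderCombined} reduces the problem to
\[
\min_{u\geq 0} f(u) := \frac{\sigma_{\bf x}^2}{(\sigma_{\bf x}^2/\sigma_{\bf v}^2)u + \sigma_{\bf x}^2/\sigma_{\bf w}^2 + 1} + \theta\sigma_{\bf x}^2 u + b^2, \qquad u=A^2.
\]
The first term is strictly convex and decreasing in $u$ and the second is linear, so $f$ is strictly convex. Solving $f'(u)=0$ yields the unique interior stationary point $u^\star = \sqrt{\sigma_{\bf v}^2/(\theta\sigma_{\bf x}^2)} - (\sigma_{\bf v}^2/\sigma_{\bf x}^2)(\sigma_{\bf x}^2/\sigma_{\bf w}^2+1)$, which is the optimizer iff $u^\star \geq 0$, equivalently $\theta<\sigma_{\bf x}^2/[\sigma_{\bf v}^2(\sigma_{\bf x}^2/\sigma_{\bf w}^2+1)^2]$; otherwise $f'(0)\geq 0$ forces the corner solution $A^2=0$. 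This reproduces the two-case formula for $A^2$, and $J^{e,*}=J^{d,*}+\theta A^2\sigma_{\bf x}^2+b^2$ follows from Remark~\ref{rem:encCost}, with $\alpha^*$ and $K^*$ read off from the first row of Table~\ref{tab:optDecoderCombined}. The main obstacle is really only the sign-restriction step: it is where the $\alpha\in[0,1]$ constraint materially breaks the $A\leftrightarrow -A$ symmetry, while the remainder is a routine one-dimensional strictly convex minimization.
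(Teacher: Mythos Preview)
Your proof is correct and takes essentially the same route as the paper: combine Theorem~\ref{thm:linearOpt} with Remark~\ref{rem:encCost} to reduce the encoder's Stackelberg problem to a one-dimensional convex minimization in the transmit power, then solve it explicitly. The only cosmetic difference is that the paper optimizes directly over $P={\bf E}[{\bf m}^2]\geq 0$ (so the sign of $A$ never appears, the achieving encoder simply being $A=\sqrt{P/\sigma_{\bf x}^2}$), whereas you first pass to $A\in\mathbb{R}$ and then add the Table~\ref{tab:optDecoderCombined} comparison to exclude $A<0$; this extra step is correct but dispensable.
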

\IEEEproof
See Appendix~\ref{appendixlinearEq}.
\endIEEEproof

\section{Multi-Stage Stackelberg Equilibrium}

In this section, we consider the dynamic counterpart of Section \ref{sec:staticStackelberg}. We start by giving the problem statement illustrated in Fig. \ref{fig:dynamic_setup1}.

\begin{figure}[htp]
	\centering
	\includegraphics[width=0.6\columnwidth]{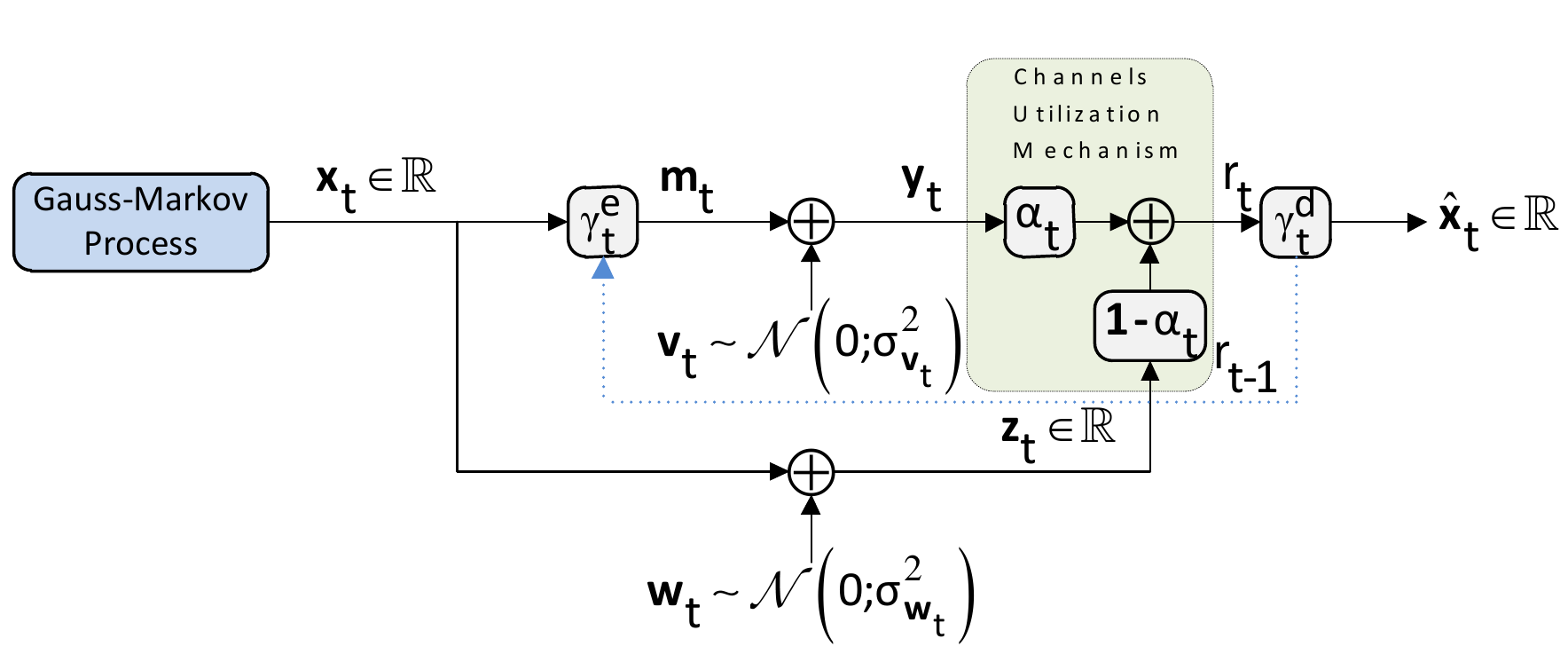}
	\caption{Multi-stage system model.} 
	\label{fig:dynamic_setup1}
\end{figure}

In Fig. \ref{fig:dynamic_setup1}, an input message is formed as a Gauss-Markov model described by the following recursion:
\begin{align}
{\bf x}_{t+1}=\beta_t{\bf x}_{t}+{\bf n}_t,~t\in\mathbb{N}_0^n,
\label{gauss-markov_model}
\end{align}
where $\{{\bm \beta}_t:~t\in\mathbb{N}_0^n\}$ is  a deterministic coefficient, the initial message ${\bf x}_0\sim{\cal N}(0;\sigma^2_{{\bf x}_0}),~\sigma^2_{{\bf x}_0}>0$, and $\{{\bf n}_t:~t\in\mathbb{N}_0^{n-1}\}$ is a mutually independent process independent of everything with ${\bf n}_t\sim{\cal N}(0;\sigma^2_{{\bf n}_t}),~ \sigma^2_{{\bf n}_t}>0$.
\par We assume a causal noisy observation of source before the decoder, modeled as a time-varying Gaussian process as follows:
\begin{align}
{\bf z}_t={\bf x}_t+{\bf w}_t,~t\in\mathbb{N}_0^n,\label{csi}
\end{align}
where $\{{\bf w}_t:~t\in\mathbb{N}_0^n\}$ is an independent noise process independent of everything with ${\bf w}_t\sim{\cal N}(0;\sigma^2_{{\bf w}_t}),~ \sigma^2_{{\bf w}_t}>0$.

At stage $t$, the encoder has access to ${\bf x}^t\triangleq\{{\bf x}_0,{\bf x}_1,\ldots,{\bf x}_t\}$ and ${\bf r}^{t-1}\triangleq\{{\bf r}_0,{\bf r}_1,\ldots,{\bf r}_{t-1}\}$ (a
noiseless feedback channel is assumed) whereas ${\bf r}^{t}\triangleq\{{\bf r}_0,{\bf r}_1,\ldots,{\bf r}_t\}$ is available to the decoder. Then, we can define the stage-wise costs of the players similar to the single-stage case, i.e., $c^e_t(x_t,\hat{x}_t)=(x_t-\hat{x}_t-b_t)^2+\theta_t (\gamma^e(x^t,r^{t-1}))^2$ and $c^d_t(x_t,\hat{x_t})=(x_t-\hat{x_t})^2$, where $b_t\in\mathbb{R}$ denotes the stage-wise bias term commonly known by the encoder and the decoder, and $\{\theta_t\in(0,\infty):~t\in\mathbb{N}_0^n\}$ are the stage-wise coefficients for the soft power constraints. Assuming myopic encoder and decoder strategies, the costs are defined as follows:
\begin{align}
J_{t}^d &=\min_{\gamma^d_t({\bf r}^{t})\,,\, {\alpha}_t\in[0,1]}{\bf E}[({\bf x}_t-\hat{\bf x}_{t})^2] \,, \nn\\
J_{t}^e & =\min_{\gamma^e_t({\bf x}^t,{\bf r}^{t-1})} J_{t}^d + {\bf E}[\theta_t(\gamma^e_t({\bf x}^t,{\bf r}^{t-1}))^2] +b_t^2 \nn\\
J_{\avertot}^d &=\frac{1}{n+1}\sum_{t=0}^nJ_t^d \,, \nn\\
J_{\avertot}^e &=\frac{1}{n+1}\sum_{t=0}^nJ_t^e \,.
\label{total_cost}
\end{align}

\begin{remark}
Note that in the sequel we see that although the costs of the encoder and decoder appear to form a nested optimization, they are not. In fact they can be decoupled to distinct time stages $J_0^e, J_1^e,\ldots, J_n^e$ for the encoder and $J_0^d, J_1^d,\ldots, J_n^d$ for the decoder, and solved independently moving forward in time. 
\end{remark}

Similar to the single-stage counterpart, first we show that the lowest estimation error is achieved when the encoder and the decoder jointly utilize linear strategies. In the following, we first find the optimal decoder for a linear memoryless encoder without any feedback. 

\begin{thm}\label{thm:dynamic_sol}
	Let the encoder use a linear memoryless strategy such that ${\bf m}_t=\gamma^e({\bf x}_t)=A_t{\bf x}_t,~t\in\mathbb{N}_0^n$. Then, the optimal decoder is obtained by a discrete time Kalman filter due to joint Gaussianity and admits closed form recursions. To present the recursions of the filter, we need to define the following conditional mean and conditional variances\footnote{Due to joint Gaussianity, the conditional variances are equivalent to the unconditional ones.}:
	\begin{align*}
	\hat{\bf x}_{t|t-1}&\triangleq{\bf E}[{\bf x}_t|{\bf r}^{t-1}], ~\Sigma_{t|t-1}\triangleq{\bf E}[({\bf x}_t-\hat{\bf x}_{t|t-1})^2|{\bf r}^{t-1}], \nn\\
\hat{\bf x}_{t|t}&\triangleq{\bf E}[{\bf x}_t|{\bf r}^{t}],~	\Sigma_{t|t}\triangleq{\bf E}[({\bf x}_t-\hat{\bf x}_{t|t})^2|{\bf r}^{t}].
	\end{align*}
	Then, $\{\hat{\bf x}_{t|t-1}, \Sigma_{t|t-1}, \hat{\bf x}_{t|t}, \Sigma_{t|t}:  t\in\mathbb{N}^n_0\}$ satisfy the following scalar-valued filtering recursions:
	\begin{align}
	\begin{split}
	\hat{\bf x}_{t|t-1}&=\beta_{t-1}\hat{\bf x}_{t-1|t-1},\\
	\Sigma_{t|t-1}&=\beta_{t-1}^2\Sigma_{t-1|t-1}+\sigma^2_{{\bf n}_{t-1}},~\Sigma_{0|-1}=\sigma^2_{{\bf x}_0},\\
	\hat{\bf x}_{t|t}&=\hat{\bf x}_{t|t-1}+K_t{\bf I}_t, \\
	{\bf I}_t&\triangleq {\bf r}_t-{\bf E}\left[{\bf r}_t|{\bf r}^{t-1}\right] =({\alpha}_tA_t+1-{\alpha}_t)\left( {\bf x}_t -\hat{\bf x}_{t|t-1} \right) \\
	&\qquad\qquad\qquad+(1-{\alpha}_t){\bf w}_t+{\alpha}_t{\bf v}_{t},~\text{(innovations)}\\
	\sigma^2_{{\bf I}_t}&=({\alpha}_tA_t+1-{ \alpha}_t)^2\Sigma_{t|t-1}+(1-{\alpha}_t)^2\sigma^2_{{\bf w}_t}+{\alpha}^2_t\sigma^2_{{\bf v}_{t}}\\
	K_t&=\frac{\Sigma_{t|t-1}({\alpha}_tA_t+1-{ \alpha}_t)}{\sigma^2_{{\bf I}_t}},~\text{(Kalman Gain)}\\
	\Sigma_{t|t}&=(1-K_t({\alpha}_tA_t+1-{ \alpha}_t))\Sigma_{t|t-1},
\end{split}\label{kf}
\end{align}
which corresponds to the optimal decoder's strategy
\begin{align}
{\gamma}_t^d({\bf r}^t)&=\hat{\bf x}_{t|t}=\hat{\bf x}_{t|t-1}+K_t{\bf I}_t \nn\\
&=\hat{\bf x}_{t|t-1}+K_t({\bf r}_t-({ \alpha}_tA_t+1-{\alpha}_t)\hat{\bf x}_{t|t-1}-{ \alpha}_tC_t).\label{optimal_dec}
\end{align}
Then, the optimal channel combining parameter $\alpha_t$, Kalman gain $K_t$ and stage-wise costs $J^{d,*}_t$ (or  $\Sigma^*_{t|t}$) are characterized in Table~\ref{tab:optDecoderMulti}. The optimal average cost is $ J_{\avertot}^{d,*}=\frac{1}{n+1}\sum_{t=0}^nJ_t^{d,*}$.
\begin{table}[ht]
	\caption{Stage-wise optimal decoder strategy for a linear encoder.}
	\label{tab:optDecoderMulti}
	\centering
	\begin{tabular}{|c|c|c|c|}
		\hline
		\text{Case} & $\alpha_t^*$ & $K_t^*$ & $J^{d,*}_t$  \\ \hline &&&\\[-1em]
		$A_t \geq 0$	& ${A_t\sigma_{{\bf w}_t}^2 \over A_t\sigma_{{\bf w}_t}^2+\sigma_{{\bf v}_t}^2}$ & ${A_t\Sigma^*_{t|t-1}\sigma_{{\bf w}_t}^2 + \Sigma^*_{t|t-1}\sigma_{{\bf v}_t}^2\over A_t^2\Sigma^*_{t|t-1}\sigma_{{\bf w}_t}^2+\Sigma^*_{t|t-1}\sigma_{{\bf v}_t}^2+\sigma_{{\bf w}_t}^2\sigma_{{\bf v}_t}^2}$ & ${\Sigma^*_{t|t-1}\sigma_{{\bf w}_t}^2\sigma_{{\bf v}_t}^2 \over (A_t^2\sigma_{{\bf w}_t}^2+\sigma_{{\bf v}_t}^2)\Sigma^*_{t|t-1}+\sigma_{{\bf w}_t}^2\sigma_{{\bf v}_t}^2}$  \\ \hline &&&\\[-1em]
		$-\sqrt{\sigma_{{\bf v}_t}^2\over\sigma_{{\bf w}_t}^2}\leq A_t\leq0$	& $0$ & ${\Sigma^*_{t|t-1} \over \Sigma^*_{t|t-1}+\sigma_{{\bf w}_t}^2}$ & ${\Sigma^*_{t|t-1}\sigma_{{\bf w}_t}^2 \over \Sigma^*_{t|t-1}+\sigma_{{\bf w}_t}^2}$ \\ \hline &&&\\[-1em]
		$A_t\leq-\sqrt{\sigma_{{\bf v}_t}^2\over\sigma_{{\bf w}_t}^2}$	& $1$ & ${A_t\Sigma^*_{t|t-1} \over A_t^2\Sigma^*_{t|t-1}+\sigma_{{\bf v}_t}^2}$ & ${\Sigma^*_{t|t-1}\sigma_{{\bf v}_t}^2 \over A_t^2\Sigma^*_{t|t-1}+\sigma_{{\bf v}_t}^2}$  \\ \hline
		\end{tabular}%
\end{table}
\end{thm}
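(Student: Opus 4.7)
The plan is to exploit joint Gaussianity of the full state/observation process to reduce the problem to a standard time-varying Kalman filter, and then reuse Theorem~\ref{thm:affineDecoder} stage-by-stage to handle the optimization over $\alpha_t$. For a linear memoryless encoder $\mathbf{m}_t = A_t \mathbf{x}_t$, the combined decoder input becomes
\[
\mathbf{r}_t = (\alpha_t A_t + 1 - \alpha_t)\,\mathbf{x}_t + \alpha_t \mathbf{v}_t + (1-\alpha_t)\mathbf{w}_t,
\]
a scalar linear Gaussian observation of $\mathbf{x}_t$. Together with the Gauss--Markov dynamics \eqref{gauss-markov_model}, this forms a linear Gaussian state-space model, so $\hat{\mathbf{x}}_{t|t} = \mathbf{E}[\mathbf{x}_t \mid \mathbf{r}^t]$ is linear and computable by the Kalman recursion.

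First I would derive the prediction step from \eqref{gauss-markov_model}: applying conditional expectation and independence of $\mathbf{n}_{t-1}$ from $\mathbf{r}^{t-1}$ yields $\hat{\mathbf{x}}_{t|t-1} = \beta_{t-1}\hat{\mathbf{x}}_{t-1|t-1}$ and $\Sigma_{t|t-1} = \beta_{t-1}^2 \Sigma_{t-1|t-1} + \sigma_{\mathbf{n}_{t-1}}^2$, initialized by $\Sigma_{0|-1} = \sigma_{\mathbf{x}_0}^2$. Next I would carry out the update step: write the innovation $\mathbf{I}_t = \mathbf{r}_t - \mathbf{E}[\mathbf{r}_t \mid \mathbf{r}^{t-1}]$ by substituting the expression for $\mathbf{r}_t$ above, compute its variance $\sigma_{\mathbf{I}_t}^2$, obtain the Kalman gain as $K_t = \mathrm{Cov}(\mathbf{x}_t, \mathbf{I}_t)/\sigma_{\mathbf{I}_t}^2$ (which, by joint Gaussianity, is the MMSE coefficient in the orthogonal decomposition $\hat{\mathbf{x}}_{t|t} = \hat{\mathbf{x}}_{t|t-1} + K_t \mathbf{I}_t$), and finish with the usual Riccati-type update $\Sigma_{t|t} = (1 - K_t(\alpha_t A_t + 1 - \alpha_t))\Sigma_{t|t-1}$. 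Matching the resulting formulas gives \eqref{kf} and \eqref{optimal_dec}.

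For the per-stage choice of $\alpha_t$, the myopic definition in \eqref{total_cost} decouples the problem: $\alpha_t$ enters only through $\Sigma_{t|t}$ at the current stage, so I would minimize $\Sigma_{t|t}$ separately for each $t$ given the already-optimal prior variance $\Sigma^*_{t|t-1}$. Substituting the Kalman gain $K_t$ into $\Sigma_{t|t}$ reduces the minimization to a one-variable problem in $\alpha_t \in [0,1]$ that is \emph{formally identical} to the one solved in Theorem~\ref{thm:affineDecoder} under the substitutions $\sigma_{\mathbf{x}}^2 \mapsto \Sigma^*_{t|t-1}$, $\sigma_{\mathbf{v}}^2 \mapsto \sigma_{\mathbf{v}_t}^2$, $\sigma_{\mathbf{w}}^2 \mapsto \sigma_{\mathbf{w}_t}^2$, $A \mapsto A_t$. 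Invoking that theorem therefore yields Table~\ref{tab:optDecoderMulti} verbatim, including the threshold $A_t \le -\sqrt{\sigma_{\mathbf{v}_t}^2 / \sigma_{\mathbf{w}_t}^2}$ (which is independent of $\Sigma^*_{t|t-1}$ and hence emerges unchanged). The stage-wise costs then feed back into $\Sigma^*_{t+1|t}$ via the prediction step, and the average cost follows by $J_{\avertot}^{d,*} = \frac{1}{n+1}\sum_t J_t^{d,*}$.

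The main obstacle, in my view, is not the Kalman derivation (which is routine for a scalar linear Gaussian model) but a careful justification that the per-stage optimization over $\alpha_t$ is indeed independent of future stages and that the prior variance $\Sigma^*_{t|t-1}$ plays precisely the role of $\sigma_{\mathbf{x}}^2$ in the static analysis. This requires pointing out that the encoder is memoryless and the decoder's stage-wise cost depends on $\alpha_t$ only through $\Sigma_{t|t}$, so no cross-stage coupling can make it advantageous to deviate from the static optimizer at any stage; once this is argued, the table entries follow by pure re-labeling from Theorem~\ref{thm:affineDecoder}.
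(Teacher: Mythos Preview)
Your proposal is correct and follows essentially the same route as the paper's proof. The paper likewise writes $\mathbf{r}_t=(\alpha_tA_t+1-\alpha_t)\mathbf{x}_t+\alpha_t\mathbf{v}_t+(1-\alpha_t)\mathbf{w}_t$, derives the Kalman recursions \eqref{kf}, expresses $J_t^d=\Sigma_{t|t}$ as a function of $(\alpha_t,K_t)$, and then optimizes forward in time using the fact that $\Sigma^*_{t|t-1}$ is fixed once the earlier stages have been handled; the only cosmetic difference is that the paper re-executes the convexity computation of Theorem~\ref{thm:affineDecoder} at $t=0$ (auxiliary variables $\phi_0=\alpha_0K_0$, $\upsilon_0=(1-\alpha_0)K_0$, positive semidefinite Hessian, endpoint comparison for $A_0<0$) and then says ``the procedure is precisely the same as in time stage $t=0$, with $\sigma^2_{\mathbf{x}_0}$ replaced by $\Sigma^*_{1|0}$,'' whereas you invoke Theorem~\ref{thm:affineDecoder} directly via that substitution.
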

\IEEEproof
See Appendix~\ref{appendixdynamic_sol}.
\endIEEEproof

Now assume that the encoder has a memory, and consider linear encoders with memory via noiseless feedback, i.e., 
\begin{align}
\gamma^e({\bf x}^t,{\bf r}^{t-1})=A_t({\bf x}_t-\hat{{\bf x}}_{t|t-1}) ,~t\in\mathbb{N}_0^n \,.
\label{linear_encoder_memory}
\end{align}
The following proposition shows that the memoryless encoder and the innovations encoder (i.e., encoder with a memory) generate the same innovations process. 
\begin{proposition}\label{prop:1} 
The innovations process at the decoder for the class of linear encoders in \eqref{linear_encoder_memory} generates the same information with the innovations process obtained for the class of linear memoryless encoders assumed in Theorem~\ref{thm:dynamic_sol} at each instant of time. Hence, the same values of ${\alpha_t^*}$, $K_t^*$ and $J_t^{d,*}$ given in Table II can be derived even if the encoder is an innovations encoder.
\end{proposition}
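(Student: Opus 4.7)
The plan is to substitute the innovations encoder $\gamma^e({\bf x}^t,{\bf r}^{t-1}) = A_t({\bf x}_t - \hat{\bf x}_{t|t-1})$ directly into the channel equations and show, by taking conditional expectations, that the resulting innovation ${\bf I}_t = {\bf r}_t - {\bf E}[{\bf r}_t \mid {\bf r}^{t-1}]$ coincides with the one computed in \eqref{kf} for the memoryless encoder. The key observation is that the subtracted prediction $\hat{\bf x}_{t|t-1}$ is, by construction, $\sigma({\bf r}^{t-1})$-measurable and equal to ${\bf E}[{\bf x}_t \mid {\bf r}^{t-1}]$, so it contributes to the conditional mean of ${\bf r}_t$ but is annihilated in the innovation.

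Concretely, I would first write out
\[
{\bf r}_t \;=\; \alpha_t A_t ({\bf x}_t - \hat{\bf x}_{t|t-1}) + \alpha_t{\bf v}_t + (1-\alpha_t)({\bf x}_t + {\bf w}_t),
\]
then take ${\bf E}[\cdot \mid {\bf r}^{t-1}]$ using ${\bf E}[{\bf x}_t - \hat{\bf x}_{t|t-1} \mid {\bf r}^{t-1}] = 0$ (orthogonality of the prediction error) together with the independence of the current-stage noises ${\bf v}_t,{\bf w}_t$ from ${\bf r}^{t-1}$, giving ${\bf E}[{\bf r}_t \mid {\bf r}^{t-1}] = (1-\alpha_t)\hat{\bf x}_{t|t-1}$. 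Subtracting yields
\[
{\bf I}_t \;=\; (\alpha_t A_t + 1 - \alpha_t)({\bf x}_t - \hat{\bf x}_{t|t-1}) + (1-\alpha_t){\bf w}_t + \alpha_t{\bf v}_t,
\]
which is formally identical to the innovation in \eqref{kf}. Consequently $\sigma^2_{{\bf I}_t}$, $K_t$, and $\Sigma_{t|t}$ obey the same recursions, and the entries of Table~\ref{tab:optDecoderMulti} for $(\alpha_t^*, K_t^*, J_t^{d,*})$ carry over verbatim.

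The one nontrivial point, and hence the main obstacle, is that $\hat{\bf x}_{t|t-1}$ itself is a function of past observations, and those past observations differ between the memoryless and the innovations-encoder setups (the received sequences ${\bf r}^{t-1}$ are different stochastic processes). I would handle this by induction on $t$: at $t=0$, $\hat{\bf x}_{0|-1} = {\bf E}[{\bf x}_0] = 0$ so the two encoder outputs agree trivially. Assuming the innovations $\{{\bf I}_0,\ldots,{\bf I}_{t-1}\}$ coincide across both setups, the Kalman recursions \eqref{kf} produce the same $\hat{\bf x}_{t-1|t-1}$ and thus the same $\hat{\bf x}_{t|t-1} = \beta_{t-1}\hat{\bf x}_{t-1|t-1}$; substituting into the display above closes the induction and shows $\sigma({\bf I}_0,\ldots,{\bf I}_t) = \sigma({\bf r}^t)$ in both cases.

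Once this equivalence is established, the remaining assertions of the proposition are immediate: the optimal decoder is the Kalman filter driven by the common innovations sequence, and the stage-wise optimizations over $\alpha_t$ yielding Table~\ref{tab:optDecoderMulti} are unchanged because they depend only on $A_t$, $\Sigma_{t|t-1}^*$, $\sigma^2_{{\bf w}_t}$ and $\sigma^2_{{\bf v}_t}$, none of which are affected by whether the encoder uses memory.
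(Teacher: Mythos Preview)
Your proof is correct and follows essentially the same route as the paper's: both compute ${\bf I}_t = {\bf r}_t - {\bf E}[{\bf r}_t\mid{\bf r}^{t-1}]$ directly for the innovations encoder, invoke the ${\bf r}^{t-1}$-measurability of $\hat{\bf x}_{t|t-1}$ together with the independence of the stage-$t$ noises, and arrive at the identical expression $(\alpha_t A_t + 1-\alpha_t)({\bf x}_t - \hat{\bf x}_{t|t-1}) + (1-\alpha_t){\bf w}_t + \alpha_t{\bf v}_t$. Your induction on $t$ to ensure that $\hat{\bf x}_{t|t-1}$ itself coincides across the two encoder setups is a careful addition that the paper leaves implicit.
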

\IEEEproof
See Appendix~\ref{appendixprop1}.
\endIEEEproof

In what follows, we leverage Proposition IV.1 to prove a theorem that generalize Theorem \ref{thm:linearOpt} to the dynamic setup.
\begin{thm}\label{theorem:optimality_linear_policies_dynamic}
	The lower bound on the estimation error $J_{\avertot}^d=\frac{1}{n+1}\sum_{t=0}^n{\bf E}\left[({\bf x}_t-\hat{\bf x}_{t|t})^2\right]=\frac{1}{2}\sum_{t=0}^nJ^{d,*}_{t,\lb}$ with $\{J^{d,*}_{t,\lb}:~t\in\mathbb{N}_0^n\}$ computed forward in time as follows:
	\begin{align}
	J_{t,\lb}^{d,*}={\Sigma^*_{t|t-1}\sigma^2_{{\bf w}_t}\sigma^2_{{\bf v}_t}\over (\frac{P_t}{\Sigma^*_{t|t-1}}\sigma^2_{{\bf w}_t}+\sigma^2_{{\bf v}_t})\Sigma^*_{t|t-1}+\sigma^2_{{\bf w}_t}\sigma^2_{{\bf v}_t}},\label{optimal_estimation_error}
	\end{align}
with $\Sigma^*_{t|t-1}=\beta^2_{t-1}J^{d,*}_{t-1,\lb}+\sigma^2_{{\bf n}_{t-1}}$, $\Sigma_{0|-1}=\sigma^2_{{\bf x}_0}$,  and $P_t\triangleq{\bf E}\left[\widetilde{\bf m}^2\right]$, $\widetilde{\bf m}\triangleq{\bf m}_t-\hat{\bf m}_{t|t-1}$, $\hat{\bf m}_{t|t-1}\triangleq{\bf E}[{\bf m}_t|{\bf r}^{t-1}]$, is the power of the innovation $\widetilde{\bf m}$ of the transmitted signal of ${\gamma}_t^e({\bf x}^t,{\bf r}^{t-1})$ by an encoder with noiseless feedback at each instant of time. This lower bound is achieved if and only if both the encoder and the decoder jointly use linear strategies. 
\end{thm}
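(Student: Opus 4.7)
The plan is to reduce the multi-stage optimization to a sequence of conditional single-stage problems and invoke Theorem~\ref{thm:linearOpt} stage by stage, then chain the resulting per-stage variances through the Gauss--Markov prediction recursion. The remark following \eqref{total_cost} supplies the crucial decoupling: because the stage-wise decoder costs $J^d_t$ are separable once the prediction $\hat{\bf x}_{t|t-1}$ is taken as given, minimizing the average $J^d_{\avertot}$ reduces to minimizing each $J^d_t$ individually and forwarding $\Sigma^*_{t|t}$ to the next stage via $\Sigma_{t|t-1}=\beta_{t-1}^2\Sigma_{t-1|t-1}+\sigma^2_{{\bf n}_{t-1}}$.

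I would then fix a stage $t$ and condition on the history ${\bf r}^{t-1}$. Writing ${\bf x}_t=\hat{\bf x}_{t|t-1}+\tilde{\bf x}_t$ with prediction error $\tilde{\bf x}_t$ of variance $\Sigma_{t|t-1}$, and ${\bf m}_t=\hat{\bf m}_{t|t-1}+\widetilde{\bf m}_t$ with innovation power $P_t$, the side observation rewrites as ${\bf z}_t-\hat{\bf x}_{t|t-1}=\tilde{\bf x}_t+{\bf w}_t$ and the channel output as ${\bf y}_t-\hat{\bf m}_{t|t-1}=\widetilde{\bf m}_t+{\bf v}_t$. Proposition~\ref{prop:1} guarantees that the innovation $\widetilde{\bf m}_t$ generated by any linear encoder with noiseless feedback matches that of a memoryless linear encoder, so the stage-$t$ sub-problem has precisely the single-stage input-output structure of Fig.~\ref{figure:combinedSide} under the parameter substitution $(\sigma_{\bf x}^2,\sigma_{\bf v}^2,\sigma_{\bf w}^2,P)\mapsto(\Sigma_{t|t-1},\sigma_{{\bf v}_t}^2,\sigma_{{\bf w}_t}^2,P_t)$. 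Applying Theorem~\ref{thm:linearOpt} to this reduced problem yields the stage-wise bound $J^d_t\geq\Sigma_{t|t-1}/\bigl(P_t/\sigma_{{\bf v}_t}^2+\Sigma_{t|t-1}/\sigma_{{\bf w}_t}^2+1\bigr)$, which rearranges algebraically to \eqref{optimal_estimation_error}, with equality if and only if the stage-$t$ encoder innovation and decoder component are jointly linear. Setting $\Sigma^*_{t|t}=J^{d,*}_{t,\lb}$ at the optimum and substituting into the prediction recursion delivers $\Sigma^*_{t|t-1}=\beta_{t-1}^2J^{d,*}_{t-1,\lb}+\sigma^2_{{\bf n}_{t-1}}$ as stated, and averaging over $t$ completes the argument.

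The main obstacle is justifying the conditional application of Theorem~\ref{thm:linearOpt} when the encoder is allowed to be an arbitrary causal (possibly nonlinear) function of $({\bf x}^t,{\bf r}^{t-1})$, since in that case joint Gaussianity of the stage-$t$ variables given the history is not automatic. I would handle this by observing that the single-stage lower bound in Theorem~\ref{thm:linearOpt} relies only on the Gaussianity and mutual independence of the exogenous source and channel noises, not on the encoder's output distribution; because $\tilde{\bf x}_t$, ${\bf v}_t$, and ${\bf w}_t$ remain Gaussian and independent of ${\bf r}^{t-1}$ regardless of the past encoder's nonlinearities, the single-stage inequality applies conditionally for each realization of the history, and the law of total expectation preserves it. Achievability in the converse direction is then secured jointly by the Kalman-filter construction of Theorem~\ref{thm:dynamic_sol} and Proposition~\ref{prop:1}, so a linear-Gaussian encoder-decoder pair simultaneously attains every stage-wise lower bound, which is precisely the \emph{if and only if} claim.
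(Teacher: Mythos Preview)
Your high-level strategy---decouple the stages, condition on ${\bf r}^{t-1}$, and apply the single-stage bound of Theorem~\ref{thm:linearOpt} under the substitution $(\sigma_{\bf x}^2,\sigma_{\bf v}^2,\sigma_{\bf w}^2,P)\mapsto(\Sigma_{t|t-1},\sigma_{{\bf v}_t}^2,\sigma_{{\bf w}_t}^2,P_t)$---is essentially what the paper does, except that the paper re-derives the per-stage information-theoretic inequalities directly (via directed information and the conditional entropies $h({\bf x}_t\mid{\bf r}^{t-1})$ and $h({\bf x}_t\mid{\bf r}^t)$) rather than invoking Theorem~\ref{thm:linearOpt} as a black box. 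The two routes produce the same per-stage bound and the same chaining through $\Sigma^*_{t|t-1}$, so methodologically you are aligned with the paper.

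The gap is in your resolution of the obstacle you correctly flag. You assert that ``$\tilde{\bf x}_t$, ${\bf v}_t$, and ${\bf w}_t$ remain Gaussian and independent of ${\bf r}^{t-1}$ regardless of the past encoder's nonlinearities.'' This is true for ${\bf v}_t$ and ${\bf w}_t$ (fresh noise, independent of the past by construction), but it is \emph{false} for $\tilde{\bf x}_t={\bf x}_t-\hat{\bf x}_{t|t-1}$. If, for instance, $\gamma^e_0$ is nonlinear, then ${\bf r}_0$ is a nonlinear function of ${\bf x}_0$ plus Gaussian noise, so the conditional law of ${\bf x}_0$ given ${\bf r}_0$ is generically non-Gaussian; consequently $\tilde{\bf x}_1=\beta_0({\bf x}_0-\hat{\bf x}_{0|0})+{\bf n}_0$ is neither conditionally Gaussian given ${\bf r}_0$ nor independent of ${\bf r}_0$ (orthogonality does not imply independence outside the jointly Gaussian world). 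Since the single-stage lower bound in Theorem~\ref{thm:linearOpt} uses $h({\bf x})=\tfrac{1}{2}\log(2\pi e\sigma_{\bf x}^2)$---i.e., Gaussianity of the \emph{source}, not merely of the channel noises---the conditional invocation breaks exactly here: with a non-Gaussian $\tilde{\bf x}_t$ one has $h({\bf x}_t\mid{\bf r}^{t-1})<\tfrac{1}{2}\log(2\pi e\,\Sigma_{t|t-1})$, and the step $J^d_t\ge\Sigma_{t|t-1}2^{-2I}$ no longer follows. The paper's own derivation relies on the same equality $h({\bf x}_t\mid{\bf r}^{t-1})=\tfrac{1}{2}\log(2\pi e\,\Sigma_{t|t-1})$, justified by appeal to the Gauss--Markov source; that equality is valid once past encoders are linear (which is precisely the regime where the bound is tight), but neither your proposal nor the paper supplies an argument covering arbitrary nonlinear past encoders. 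Your achievability direction via Theorem~\ref{thm:dynamic_sol} and Proposition~\ref{prop:1} is fine and matches the paper.
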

\IEEEproof
See Appendix~\ref{appendixoptimality_linear_policies_dynamic}.
\endIEEEproof

Observe the following regarding the optimization problem at the encoder:
\begin{align}
J_{\avertot}^{e}&=\min\limits_{\substack{{\bf m_t}=\gamma^e({\bf x}^t,{\bf r}^{t-1})}} \frac{1}{n+1} \sum_{t=0}^n J_{t}^{d,*} + {\bf E}[\theta_t({\bf m_t})^2] + b_t^2 \nn\\
&\geq \min\limits_{\substack{{\bf m_t}=\gamma^e({\bf x}^t,{\bf r}^{t-1})}} \frac{1}{n+1} \sum_{t=0}^n J_{t,\lb}^{d,*} + {\bf E}[\theta_t({\bf m_t})^2] + b_t^2 \nn\\
&\geq \min\limits_{\substack{{\bf m_t}=\gamma^e({\bf x}^t,{\bf r}^{t-1})}} \frac{1}{n+1} \sum_{t=0}^n {\Sigma^*_{t|t-1}\sigma^2_{{\bf w}_t}\sigma^2_{{\bf v}_t}\over (\frac{P_t}{\Sigma^*_{t|t-1}}\sigma^2_{{\bf w}_t}+\sigma^2_{{\bf v}_t})\Sigma^*_{t|t-1}+\sigma^2_{{\bf w}_t}\sigma^2_{{\bf v}_t}} + {\bf E}[\theta_t({\bf m}_t)^2] +b_t^2 \,.
\end{align}
Here, due to Theorem~\ref{theorem:optimality_linear_policies_dynamic}, the lower bound is achievable for a linear encoder, i.e., ${\bf m_t}=\gamma^e({\bf x}^t,{\bf r}^{t-1})=A_t({\bf x}_t-\hat{{\bf x}}_{t|t-1})$, which implies $P_t={\bf E}\left[\widetilde{\bf m}^2\right]={\bf E}\left[({\bf m}_t-\hat{\bf m}_{t|t-1})^2\right]=A_t^2\Sigma_{t|t-1}$. Then the optimization problem at the encoder becomes
\begin{align}
&J_{\avertot}^{e}=\min\limits_{\substack{A_t\geq{0},~t\in\mathbb{N}_0^n}} \frac{1}{n+1} \sum_{t=0}^n\left[{\Sigma^*_{t|t-1}\sigma^2_{{\bf w}_t}\sigma^2_{{\bf v}_t}\over (A_t^2\sigma^2_{{\bf w}_t}+\sigma^2_{{\bf v}_t})\Sigma^*_{t|t-1}+\sigma^2_{{\bf w}_t}\sigma^2_{{\bf v}_t}}+\theta_t\left(A_t^2\Sigma_{t|t-1}\right)+b_t^2\right],\label{enc_optimization_soft}
\end{align}
where $\Sigma^*_{0|-1}=\sigma^2_{{\bf x}_0}$. The solution is obtained recursively, forward in time in the next theorem.
\begin{thm}\label{theorem:enc_sol_soft}(Recursive solution forward in time of \eqref{enc_optimization_soft}) For given $\{\theta_t\in(0,\infty):~t\in\mathbb{N}_0^n\}$, $\Sigma^*_{0|-1}=\sigma^2_{{\bf x}_0}$, the solution of \eqref{enc_optimization_soft} is as follows:
	\begin{align}
	J_{\avertot}^{e,*}=\frac{1}{n+1}\sum_{t=0}^nJ_t^{e,*},~\label{enc_optimization_sol_soft}
	\end{align}
	where $\{J_t^{e,*}:~t=0,1,\ldots,n\}$ are computed forward in time by
	\begin{align}
	J_t^{e,*}&=\left[J_t^{d,*}+\theta_tA_t^{2,*}\Sigma^*_{t|t-1}+b_t^2\right],~\Sigma^*_{0|-1}=\sigma^2_{{\bf x}_0},\label{individual_cost_soft}\\
	J_t^{d,*}&={\Sigma^*_{t|t-1}\sigma^2_{{\bf w}_t}\sigma^2_{{\bf v}_t}\over (A_t^{2,*}\sigma^2_{{\bf w}_t}+\sigma^2_{{\bf v}_t})\Sigma^*_{t|t-1}+\sigma^2_{{\bf w}_t}\sigma^2_{{\bf v}_t}},\label{individual_prime_cost}
	\end{align}
	where $A_t^{2,*}$ are computed according to the following decision rule
	\begin{align}
	A_t^{2,*}=\begin{cases}
	&\frac{\sigma^2_{{\bf v}_t}}{\sqrt{\theta_t\Sigma^*_{t|t-1}\sigma^2_{{\bf v}_t}}}-\frac{\sigma^2_{{\bf v}_t}}{\Sigma^*_{t|t-1}}\left(\frac{\Sigma^*_{t|t-1}}{\sigma^2_{{\bf w}_t}}+1\right),~\qquad\\
	&\qquad\qquad\qquad\mbox{if}\qquad~\mbox{$\theta_t<\frac{\Sigma^*_{t|t-1}}{\sigma^2_{{\bf v}_t}\left(\frac{\Sigma^*_{t|t-1}}{\sigma^2_{{\bf w}_t}}+1\right)^2}$}\\
	&0,~\qquad\\
	&\qquad\qquad\qquad\mbox{if}\qquad~\mbox{$\theta_t\geq\frac{\Sigma^*_{t|t-1}}{\sigma^2_{{\bf v}_t}\left(\frac{\Sigma^*_{t|t-1}}{\sigma^2_{{\bf w}_t}}+1\right)^2}$}\\
	\end{cases}.\label{cases_optimal_A_soft}
	\end{align}
	and $\{\Sigma_{t|t-1}^*:~t=1,\ldots,n\}$ are computed forward in time by
	\begin{align}
	\Sigma_{t|t-1}^*=\beta_{t-1}^2J_t^{d,*}+\sigma^2_{n-1}.\label{covariance_cost_soft}
	\end{align}
\end{thm}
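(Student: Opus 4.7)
The plan is to reduce the global optimization in \eqref{enc_optimization_soft} to $n+1$ decoupled scalar minimizations, each structurally identical to the static problem solved in the proof of Theorem~\ref{thm:linearEq}. The reduction rests on a structural observation: the $t$-th summand of \eqref{enc_optimization_soft} depends on the decision variables only through $A_t^2$ and through $\Sigma^*_{t|t-1}$, while by Theorem~\ref{thm:dynamic_sol} the predicted variance $\Sigma^*_{t|t-1}$ is fully determined by the earlier choices $A_0,\ldots,A_{t-1}$ through the Kalman prediction step. Because $J_t^e$ in \eqref{total_cost} is myopic, no summand at time $s\neq t$ involves $A_t$ once the past is fixed, so one can solve stage $0$ first with the initial value $\Sigma^*_{0|-1}=\sigma^2_{{\bf x}_0}$, then propagate the state via $\Sigma^*_{t+1|t}=\beta_t^2 J_t^{d,*}+\sigma^2_{{\bf n}_t}$ as in \eqref{covariance_cost_soft}, and iterate forward in time; this is exactly the recursive skeleton claimed by the theorem.

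First I would set up the stage-$t$ subproblem. After dropping the $b_t^2$ constant it reads
\begin{align*}
\min_{\xi\geq 0}\ f_t(\xi)\triangleq\frac{\Sigma^*_{t|t-1}\sigma^2_{{\bf w}_t}\sigma^2_{{\bf v}_t}}{(\xi\,\sigma^2_{{\bf w}_t}+\sigma^2_{{\bf v}_t})\Sigma^*_{t|t-1}+\sigma^2_{{\bf w}_t}\sigma^2_{{\bf v}_t}}+\theta_t\,\Sigma^*_{t|t-1}\,\xi.
\end{align*}
Under the renaming $\sigma_{\bf x}^2\mapsto\Sigma^*_{t|t-1}$, $\sigma_{\bf v}^2\mapsto\sigma^2_{{\bf v}_t}$, $\sigma_{\bf w}^2\mapsto\sigma^2_{{\bf w}_t}$, $\theta\mapsto\theta_t$, this is exactly the encoder optimization carried out in the proof of Theorem~\ref{thm:linearEq}, so the two-case answer can be imported verbatim. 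A short self-contained argument also works: $f_t$ is strictly convex on $\{\xi\geq 0\}$ (the first term is a positive reciprocal of an affine function, the second is linear), so $f_t'(\xi)=0$ has a unique interior root which, after clearing denominators and taking the positive square root, equals $\sqrt{\sigma^2_{{\bf v}_t}/(\theta_t\Sigma^*_{t|t-1})}-(\sigma^2_{{\bf v}_t}/\Sigma^*_{t|t-1})(\Sigma^*_{t|t-1}/\sigma^2_{{\bf w}_t}+1)$. Projecting onto $\{\xi\geq 0\}$ yields the dichotomy \eqref{cases_optimal_A_soft}, and the quoted threshold on $\theta_t$ is exactly the zero-crossing of that interior root after squaring.

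The remaining step is to package the outputs. Substituting $A_t^{2,*}$ into the first row of Table~\ref{tab:optDecoderMulti} (valid because $A_t^{2,*}\geq 0$ by construction) yields \eqref{individual_prime_cost}; adding $\theta_t A_t^{2,*}\Sigma^*_{t|t-1}+b_t^2$ gives \eqref{individual_cost_soft}; averaging over $t$ produces \eqref{enc_optimization_sol_soft}. The restriction to $A_t\geq 0$ in \eqref{enc_optimization_soft} is justified exactly as in the static Stackelberg analysis by comparing the three rows of Table~\ref{tab:optDecoderMulti}: at equal $|A_t|$ the encoder pays the same power penalty in all regimes but obtains the smallest decoder cost in the first row, since the middle regime does not improve on $A_t=0$ and the third regime loses the convex-combination gain from the side channel. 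The main obstacle is purely algebraic, namely carefully matching the interior stationary point against the exact form stated in \eqref{cases_optimal_A_soft} and verifying its sign switch at the stated threshold; the dynamic infrastructure is already supplied by Theorems~\ref{theorem:optimality_linear_policies_dynamic} and \ref{thm:dynamic_sol}, which guarantee optimality of linear strategies and the Kalman structure of the decoder.
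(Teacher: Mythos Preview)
Your proof is correct and follows essentially the same forward-in-time, stage-by-stage approach as the paper: both exploit the myopic cost structure and the Kalman prediction recursion to decouple \eqref{enc_optimization_soft} into independent scalar convex minimizations, one per stage, with $\Sigma^*_{t|t-1}$ fixed by the earlier stages. The only cosmetic difference is that the paper re-derives each per-stage optimum via an explicit KKT analysis (Lagrange multipliers for the constraint $\mu_t=A_t^2\geq 0$, complementary slackness, and a case split on whether $\mu_t^*=0$), whereas you more economically recognize each stage as an instance of the already-solved static problem of Theorem~\ref{thm:linearEq} under the renaming $\sigma_{\bf x}^2\mapsto\Sigma^*_{t|t-1}$.
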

\IEEEproof
See Appendix~\ref{appendixenc_sol_soft}.
\endIEEEproof

In Algorithm~\ref{algo2}, we summarize the previous results by providing an iterative scheme to compute the multi-stage Stackelberg equilibrium. 
\begin{varalgorithm}{1}
	\caption{Multi-stage Stackelberg equilibrium}
	\begin{algorithmic}
		\STATE {\textbf{Initialize:} Set $\Sigma_{0|-1}^*=\sigma^2_{{\bf x}_0}$, choose $\{(\beta_{t}, \sigma^2_{{\bf n}_t}):~t\in\mathbb{N}_0^{n-1}\}$ of \eqref{gauss-markov_model}; choose $\{(\sigma^2_{{\bf w}_t},\sigma^2_{{\bf v}_t}):~t\in\mathbb{N}_0^n\}$; choose $\{\theta_t\in(0,\infty):~t\in\mathbb{N}_0^n\}$.} 
		\FOR {$t=0:n$}
		\IF {$t>0$}
		\STATE {Compute $\Sigma^*_{t|t-1}$ according to \eqref{covariance_cost_soft}.}
		\ENDIF
		\STATE {Compute $A_{t}^{2,*}$ according to \eqref{cases_optimal_A_soft}.}
		\STATE{Compute $J_t^{d,*}$ according to \eqref{individual_prime_cost}.}
	   \STATE{Compute $J_t^{e,*}$ according to \eqref{individual_cost_soft}.}
		\ENDFOR
		\STATE {Compute $J_{\avertot}^{e,*}$ according to \eqref{enc_optimization_sol_soft}.}
		\STATE {Compute $J_{\avertot}^{d,*}=\frac{1}{n+1}\sum_{t=0}^nJ^{d,*}_t$.}
	\end{algorithmic}
	\label{algo2}
\end{varalgorithm}

\section{Nash Equilibrium}

In this section, we analyze the Nash equilibrium of the game between the encoder and the decoder. We consider only affine equilibria.

\begin{thm}\label{thm:affineNash}
	Consider a single-stage scenario.
	\begin{itemize}
		\item[(i)] If the encoder is affine, the optimal decoder is also affine. 
		\item[(ii)] If the decoder is affine, the optimal encoder is also affine. 
		\item[(iii)] For $\theta<{{\sigma_{\bf x}^2\over\sigma_{\bf v}^2} \over \left({\sigma_{\bf x}^2\over\sigma_{\bf w}^2}+1\right)^2}$, there are two affine Nash equilibria. In particular, letting $A^*\triangleq \sqrt{\sqrt{{1\over\theta}{\sigma_{\bf v}^2\over\sigma_{\bf x}^2}} - {\sigma_{\bf v}^2\over\sigma_{\bf w}^2}-{\sigma_{\bf v}^2\over\sigma_{\bf x}^2}}$, two sets of $\gamma^e({\bf x})=A{\bf x}+C$, $\gamma^d({\bf r})=K{\bf r}+L$ and the channel combining parameter $\alpha$ are characterized as 
	\begin{table}[ht]
	\centering
	\begin{tabular}{|c|c|c|c|c|}
		\hline
		$A$ & $C$ & $K$ & $L$ & $\alpha$  \\ \hline &&&\\[-1em]
		$A^*$	& $-{\alpha Kb\over\theta}$ & $\sqrt{{\theta}{\sigma_{\bf x}^2\over\sigma_{\bf v}^2}}\left(A^*\sigma_{\bf x}^2\sigma_{\bf w}^2 + \sigma_{\bf x}^2\sigma_{\bf v}^2\right)$ & ${\alpha^2 K^2b\over\theta}$ & ${A^*\sigma_{\bf w}^2 \over A^*\sigma_{\bf w}^2+\sigma_{\bf v}^2}$ \\ \hline &&&\\[-1em]
		$-\sqrt{\sqrt{\sigma_{\bf v}^2\over\theta\sigma_{\bf x}^2 }-{\sigma_{\bf v}^2\over\sigma_{\bf x}^2 }}$	& $-{b \sqrt{\sqrt{\theta\sigma_{\bf x}^2\over \sigma_{\bf v}^2}-\theta}\over\theta}$ & $\sqrt{\sqrt{\theta\sigma_{\bf x}^2\over \sigma_{\bf v}^2}-\theta}$ & ${b\left({\sqrt{\theta\sigma_{\bf x}^2\over \sigma_{\bf v}^2}-\theta}\right)\over\theta}$ & $1$ \\ \hline 
	\end{tabular}%
	\end{table}

	Furthermore, for any value of $\theta$, the following also forms an affine Nash equilibrium:
	\begin{align*}
	A=0 \,,\quad C=0 \,,\quad K = {\sigma_{\bf x}^2 \over \sigma_{\bf x}^2+\sigma_{\bf w}^2} \,,\quad L=0 \,,\quad \alpha = 0\,.
	\end{align*}
	\end{itemize}
\end{thm}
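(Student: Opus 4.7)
My plan is to establish parts (i) and (ii) by explicit best-response computations and then assemble part (iii) by imposing mutual consistency of the two best-response maps.

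For part (i), I will exploit joint Gaussianity: if $\gamma^e(\mathbf{x}) = A\mathbf{x} + C$ then $\mathbf{r} = (\alpha A + 1-\alpha)\mathbf{x} + \alpha C + \alpha\mathbf{v} + (1-\alpha)\mathbf{w}$ is an affine function of the jointly Gaussian triple $(\mathbf{x},\mathbf{v},\mathbf{w})$, so $(\mathbf{x}, \mathbf{r})$ is jointly Gaussian. Since $J^d$ is a mean-squared error, the decoder's best response is $\hat{\mathbf{x}} = \mathbf{E}[\mathbf{x}|\mathbf{r}]$, which is then affine in $\mathbf{r}$ by the standard MMSE formula, and the optimal $\alpha$ is picked via Theorem~\ref{thm:affineDecoder}.

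For part (ii), I will assume $\gamma^d(\mathbf{r}) = K\mathbf{r} + L$, substitute $\mathbf{r} = \alpha(\gamma^e(\mathbf{x}) + \mathbf{v}) + (1-\alpha)(\mathbf{x} + \mathbf{w})$ into $c^e$, and take expectation. The $\mathbf{v}$- and $\mathbf{w}$-dependent terms decouple because they are zero mean and independent, leaving
\begin{align*}
J^e = \mathbf{E}\left[\left((1-K(1-\alpha))\mathbf{x} - K\alpha\,\gamma^e(\mathbf{x}) - L - b\right)^2\right] + \theta\,\mathbf{E}[(\gamma^e(\mathbf{x}))^2] + K^2\alpha^2\sigma_{\mathbf{v}}^2 + K^2(1-\alpha)^2\sigma_{\mathbf{w}}^2,
\end{align*}
which is a strictly convex quadratic in $\gamma^e(x)$ pointwise. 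Differentiating with respect to $\gamma^e(x)$ for each realization $x$ yields the explicit affine optimum
\begin{align*}
\gamma^e(x) = \frac{K\alpha\bigl((1-K(1-\alpha))\,x - L - b\bigr)}{\theta + K^2\alpha^2}.
\end{align*}

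For part (iii), I parameterize a candidate affine equilibrium by $(A,C,K,L,\alpha)$ and impose mutual best-responding. From part (ii) I read off $A = K\alpha(1-K(1-\alpha))/(\theta + K^2\alpha^2)$ and $C = -K\alpha(L+b)/(\theta + K^2\alpha^2)$, while part (i) combined with Theorem~\ref{thm:affineDecoder} gives $L = -K\alpha C$ (the intercept of $\mathbf{E}[\mathbf{x}|\mathbf{r}]$, since $\mathbf{E}[\mathbf{r}] = \alpha C$) together with the regime-dependent values of $K$ and $\alpha$ from Table~\ref{tab:optDecoderCombined}. Eliminating $L$ collapses the $C$-equation to $C = -K\alpha b/\theta$, and hence $L = K^2\alpha^2 b/\theta$, matching the table and reducing the unknowns to $A$ alone. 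I will then split into three regimes mirroring Table~\ref{tab:optDecoderCombined}: (a) $A>0$ with $\alpha = A\sigma_{\mathbf{w}}^2/(A\sigma_{\mathbf{w}}^2 + \sigma_{\mathbf{v}}^2)$; (b) $A \leq -\sqrt{\sigma_{\mathbf{v}}^2/\sigma_{\mathbf{w}}^2}$ with $\alpha = 1$; and (c) the intermediate regime where $\alpha = 0$ forces $A=0$. In (a), substituting $K$ and $\alpha$ into the fixed-point condition and cancelling one factor of $A$ simplifies to $\theta\,(A^2\sigma_{\mathbf{x}}^2\sigma_{\mathbf{w}}^2 + \sigma_{\mathbf{x}}^2\sigma_{\mathbf{v}}^2 + \sigma_{\mathbf{w}}^2\sigma_{\mathbf{v}}^2)^2 = \sigma_{\mathbf{x}}^2\sigma_{\mathbf{w}}^4\sigma_{\mathbf{v}}^2$, solving to the stated $A^*$, with $A^{*2}>0$ iff $\theta < (\sigma_{\mathbf{x}}^2/\sigma_{\mathbf{v}}^2)/(\sigma_{\mathbf{x}}^2/\sigma_{\mathbf{w}}^2+1)^2$. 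In (b), the condition collapses to $(A^2\sigma_{\mathbf{x}}^2 + \sigma_{\mathbf{v}}^2)^2 = \sigma_{\mathbf{x}}^2\sigma_{\mathbf{v}}^2/\theta$, yielding the negative-$A$ equilibrium in the second row. In (c), $\alpha = 0$ immediately forces $A=0$ and $C=0$ via the encoder best response, producing the degenerate equilibrium valid for every $\theta$.

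The hard part will be the bookkeeping in part (iii): after plugging in the regime-dependent decoder responses, the resulting scalar equation in $A$ has high apparent degree, so I must simplify carefully and then verify that each candidate root is consistent with its regime hypothesis, namely the sign of $A$ and the threshold separating the three rows of Table~\ref{tab:optDecoderCombined}. The existence condition on $\theta$ is then recovered as the positivity condition for $A^{*2}$ in regime (a), while the degenerate $(A,C,\alpha)=(0,0,0)$ solution arises as the only possibility consistent with regime (c) and exists unconditionally.
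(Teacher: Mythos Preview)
Your proposal is correct and follows essentially the same approach as the paper's own proof. Both arguments handle part~(i) via the MMSE estimator $\mathbf{E}[\mathbf{x}|\mathbf{r}]$ under joint Gaussianity, part~(ii) by completing the square in the encoder's cost to obtain the explicit affine best response $\gamma^e(x)=\frac{\alpha K\left((1-(1-\alpha)K)x-L-b\right)}{\alpha^2K^2+\theta}$, and part~(iii) by imposing mutual consistency of the best responses and splitting into the three regimes of Table~\ref{tab:optDecoderCombined}; the intermediate simplifications you outline (eliminating $L$ to get $C=-K\alpha b/\theta$, then reducing to the scalar fixed-point equation in $A$, and checking each root against its regime hypothesis) match the paper's computations line by line.
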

\IEEEproof
See Appendix~\ref{appendixaffineNash}.
\endIEEEproof

\begin{remark}
Similar to the single-stage case, affine strategies constitute an invariant subspace under best response maps for multi-stage Nash equilibria. In particular, the first and the second parts of Theorem~\ref{thm:affineNash} can be extended to the multi-stage scenario. However, since the number of equations and unknowns increase quadratically, an explicit analysis as in the third part of Theorem~\ref{thm:affineNash} becomes infeasible.
\end{remark}

\section{Conclusion}

In this paper, we studied Nash and Stackelberg equilibria for single-stage and multi-stage quadratic signaling games with channel combining\&utilization at the decoder. We established qualitative (e.g. 
linearity and informativeness) and quantitative properties (on linearity or explicit computation) of Nash and Stackelberg equilibria under misaligned objectives.

Our model has many possible interesting extensions. Of
particular interest are the case when there is a hard power constraint for the encoder and the analysis of steady state equilibria. Scenarios with more general alternative channels/encoders, or with more general objective functions are also under consideration.

\appendices

\section{Proof of Theorem~\ref{thm:affineDecoder}} \label{appendixaffineDecoder} 
	For the given encoder strategy ${\bf m}=\gamma^e({\bf x})=A{\bf x}$, the decoder input is 
\begin{align*}
{\bf r} = (\alpha A+1-\alpha){\bf x}+\alpha {\bf v}+(1-\alpha){\bf w}  
\end{align*}
when the decoder adjusts the time-sharing parameter $\alpha$ of the channels. Then, the optimal decoder strategy is $\gamma^d({\bf r})=\hat{{\bf x}}={\bf E}[{\bf x}|{\bf r}]$, which can be expressed as
\begin{align}
\gamma^d({\bf r}) = \underbrace{{(\alpha A+1-\alpha)\sigma_{\bf x}^2 \over (\alpha A+1-\alpha)^2\sigma_{\bf x}^2 + \alpha^2\sigma_{\bf v}^2 + (1-\alpha)^2\sigma_{\bf w}^2}}_{\triangleq K}{\bf r} \,.
\label{eq:optDecoderEstCombined}
\end{align}
Then, the corresponding decoder cost (namely, the estimation error) is
\begin{align}
J^d &= {\bf E}[({\bf x}-K{\bf r})^2] = ((\alpha A+1-\alpha)K-1)^2\sigma_{\bf x}^2  + \alpha^2K^2\sigma_{\bf v}^2 + (1-\alpha)^2K^2\sigma_{\bf w}^2 \,.
\label{eq:decoderCostCombined}
\end{align}

The decoder is trying to minimize $J^d$ by adjusting both $K$ and $\alpha$. Let $t\triangleq\alpha K$ and $u\triangleq (1-\alpha) K$. Then, the decoder cost becomes $J^d = (At+u-1)^2\sigma_{\bf x}^2 + t^2\sigma_{\bf v}^2 + u^2\sigma_{\bf w}^2$.
Since the Hessian matrix $\mathbb{H}=\begin{bmatrix} {\partial^2 J^d\over\partial t^2} & {\partial^2 J^d\over\partial t\partial u} \\
{\partial^2 J^d\over\partial u\partial t} & {\partial^2 J^d\over\partial u^2} \end{bmatrix}$
is positive semi-definite, $J^d$ is a convex function of $t$ and $u$. Thus, at the optimum point, i.e., when ${\partial J^d \over \partial t}={\partial J^d \over \partial u}=0$, we obtain $\alpha = {A\sigma_{\bf w}^2 \over A\sigma_{\bf w}^2+\sigma_{\bf v}^2}$ and $K = {A\sigma_{\bf x}^2\sigma_{\bf w}^2 + \sigma_{\bf x}^2\sigma_{\bf v}^2\over A^2\sigma_{\bf x}^2\sigma_{\bf w}^2+\sigma_{\bf x}^2\sigma_{\bf v}^2+\sigma_{\bf w}^2\sigma_{\bf v}^2}$.
By inserting these into \eqref{eq:decoderCostCombined}, we obtain $J^d = {\sigma_{\bf x}^2 \over A^2{\sigma_{\bf x}^2\over\sigma_{\bf v}^2}+{\sigma_{\bf x}^2\over\sigma_{\bf w}^2}+1}$.

However, note that 
when $A<0$, the optimal $\alpha$ lies outside of its feasible region $[0,1]$. Thus, the extreme values of this closed interval should be compared for $A<0$.

Let $\alpha=0$. Then, the optimal decoder is $\gamma^d({\bf r})={\sigma_{\bf x}^2 \over \sigma_{\bf x}^2+\sigma_{\bf w}^2}{\bf r}$ by \eqref{eq:optDecoderEstCombined}, which results in the decoder cost $J^d={\sigma_{\bf x}^2\sigma_{\bf w}^2 \over \sigma_{\bf x}^2+\sigma_{\bf w}^2}$ by \eqref{eq:decoderCostCombined}.

Now let $\alpha=1$, i.e., a point-to-point communication scenario is considered. Then, the optimal decoder is $\gamma^d({\bf r})={A\sigma_{\bf x}^2 \over A^2\sigma_{\bf x}^2+\sigma_{\bf v}^2}{\bf r}$ by \eqref{eq:optDecoderEstCombined}, which corresponds to the decoder cost $J^d={\sigma_{\bf x}^2\sigma_{\bf v}^2 \over A^2\sigma_{\bf x}^2+\sigma_{\bf v}^2}$ by \eqref{eq:decoderCostCombined}.

Then, the following comparison can be made to find the optimal decoder for $A<0$:
\begin{align}
{\sigma_{\bf x}^2\sigma_{\bf w}^2 \over \sigma_{\bf x}^2+\sigma_{\bf w}^2} \overset{\alpha=0}{\underset{\alpha=1}{\lesseqgtr}} {\sigma_{\bf x}^2\sigma_{\bf v}^2 \over A^2\sigma_{\bf x}^2+\sigma_{\bf v}^2} &\Rightarrow A^2 \overset{\alpha=0}{\underset{\alpha=1}{\lesseqgtr}} {\sigma_{\bf v}^2\over\sigma_{\bf w}^2} \,.
\end{align}
Hence, $-\sqrt{\sigma_{\bf v}^2\over\sigma_{\bf w}^2}\leq A\leq0$ corresponds to the case with $\alpha=0$, and $A\leq-\sqrt{\sigma_{\bf v}^2\over\sigma_{\bf w}^2}$ corresponds to the case with $\alpha=1$.\\
This completes the derivation. \qed

\section{Proof of Theorem~\ref{thm:linearOpt}} \label{appendixlinearOpt} 
	
In the proof, we first obtain information theoretic lower bound on the estimation error, then show that this lower bound is achieved when the players jointly utilize linear strategies.

Since the decoder's received signal is ${\bf r}=\alpha({\bf m}+{\bf v}) + (1-\alpha)({\bf x}+{\bf w})$, its power can be expressed as
\begin{align*}
{\bf E}[{\bf r}^2] 
&= \underbrace{\alpha^2 P + (1-\alpha)^2\sigma_{\bf x}^2 + 2\alpha(1-\alpha){\bf E}[{\bf m}{\bf x}]}_{\text{signal power}} +\underbrace{\alpha^2\sigma_{\bf v}^2 + (1-\alpha)^2\sigma_{\bf w}^2}_{\text{noise power}} \,.
\end{align*}
Since the channels are additive Gaussian, a corresponding (combined) channel/information capacity $C$ between ${\bf x}$ and ${\bf r}$ can be represented as
\begin{align}
C &= \sup I({\bf x};{\bf r}) = \frac{1}{2} \log_2\left(1+{\alpha^2 P + (1-\alpha)^2\sigma_{\bf x}^2 + 2\alpha(1-\alpha){\bf E}[{\bf m}{\bf x}] \over \alpha^2\sigma_{\bf v}^2 + (1-\alpha)^2\sigma_{\bf w}^2}\right) \,. \label{eq:capXR}
\end{align}
Then, the lower bound on the estimation error can be derived as follows:
\begin{align} \label{eq:scalarITinequalities}
I({\bf x};{\bf r}) &= h({\bf x})-h({\bf x}|{\bf r}) = h({\bf x})-h({\bf x}-{\bf E}[{\bf x}|{\bf r}]|{\bf r}) \nn\\
&\geq h({\bf x})-h({\bf x}-{\bf E}[{\bf x}|{\bf r}]) \overset{(a)}{\geq} {1\over 2}\log_2(2\pi \mathrm{e} \sigma_{\bf x}^2)-{1\over 2}\log_2(2\pi \mathrm{e} J^d) \nn\\
&\Rightarrow I({\bf x};{\bf r}) \geq {1\over 2}\log_2({\sigma_{\bf x}^2 \over J^d}) \nn\\
&\Rightarrow J^d \geq \sigma_{\bf x}^2 2^{-2I({\bf x};{\bf r})} \geq \sigma_{\bf x}^2 2^{-2 \sup I({\bf x};{\bf r})} 
\overset{(b)}{=} \sigma_{\bf x}^2 2^{-2 \frac{1}{2} \log_2\big(1+{\alpha^2 P + (1-\alpha)^2\sigma_{\bf x}^2 + 2\alpha(1-\alpha){\bf E}[{\bf m}{\bf x}] \over \alpha^2\sigma_{\bf v}^2 + (1-\alpha)^2\sigma_{\bf w}^2}\big)} \nn\\
&\qquad\;\; =\frac{\sigma_{\bf x}^2}{1+{\alpha^2 P + (1-\alpha)^2\sigma_{\bf x}^2 + 2\alpha(1-\alpha){\bf E}[{\bf m}{\bf x}] \over \alpha^2\sigma_{\bf v}^2 + (1-\alpha)^2\sigma_{\bf w}^2}} \overset{(c)}{\geq} \frac{\sigma_{\bf x}^2}{{P\over\sigma_{\bf v}^2}+{\sigma_{\bf x}^2\over\sigma_{\bf w}^2}+1} \,.
\end{align} 
Here, (a) holds since the differential entropy is $h({\bf x}) = {1\over 2}\log_2(2\pi \mathrm{e} \sigma_{\bf x}^2)$ for a Gaussian source ${\bf x}$ and maximum $h({\bf x}-{\bf E}[{\bf x}|{\bf r}])$ is achieved when ${\bf x}-{\bf E}[{\bf x}|{\bf r}]$ is Gaussian, (b) follows from \eqref{eq:capXR}, and (c) holds for $0<\alpha<1$ due to the following inequalities:
\begin{align*}
1+{\alpha^2 P + (1-\alpha)^2\sigma_{\bf x}^2 + 2\alpha(1-\alpha){\bf E}[{\bf m}{\bf x}] \over \alpha^2\sigma_{\bf v}^2 + (1-\alpha)^2\sigma_{\bf w}^2} 
&\overset{(a)}{\leq} 1+{\alpha^2 P + (1-\alpha)^2\sigma_{\bf x}^2 + 2\alpha(1-\alpha)\sqrt{P\sigma_{\bf x}^2} \over \alpha^2\sigma_{\bf v}^2 + (1-\alpha)^2\sigma_{\bf w}^2} \\
&\overset{(b)}{\leq} 1+{\alpha^2 P\over\alpha^2\sigma_{\bf v}^2}+{(1-\alpha)^2\sigma_{\bf x}^2\over(1-\alpha)^2\sigma_{\bf w}^2} \,,
\end{align*} 
where (a) holds due to the Cauchy-Schwarz inequality and (b) holds\footnote{Note that ${a \over c} + {b \over d}-{(\sqrt{a}+\sqrt{b})^2 \over c+d}={ad+bc \over cd}-{a+b+2\sqrt{ab} \over c+d}={ad^2+bc^2-2\sqrt{ab}cd \over cd(c+d)} = {(\sqrt{a}d-\sqrt{b}c)^2\over cd(c+d)}\geq0$.} since ${(\sqrt{a}+\sqrt{b})^2 \over c+d} \leq {a \over c} + {b \over d}$ for positive $a,b,c,d$ with $a=\alpha^2P$, $b=(1-\alpha)^2\sigma_{\bf x}^2$, $c=\alpha^2\sigma_{\bf v}^2$, and $d=(1-\alpha)^2\sigma_{\bf w}^2$. 

In \eqref{eq:scalarITinequalities}, the first inequality is tight iff ${\bf x}$ and ${\bf r}$ are jointly Gaussian, which is satisfied for a linear encoder, whereas the second inequality (i.e., (c) of \eqref{eq:scalarITinequalities}) holds with equality for $0<\alpha<1$ when $\sqrt{a}d=\sqrt{b}c \Rightarrow \alpha\sqrt{P}(1-\alpha)^2\sigma_{\bf w}^2=(1-\alpha)\sigma_{\bf x}\alpha^2\sigma_{\bf v}^2 \Rightarrow \sqrt{P}={\alpha\over1-\alpha}\sigma_{\bf x}{\sigma_{\bf v}^2\over\sigma_{\bf w}^2}$. Since $\alpha={A\sigma_{\bf w}^2 \over A\sigma_{\bf w}^2+\sigma_{\bf v}^2}$ for a linear encoder ${\bf m}=\gamma^e({\bf x})=A{\bf x}$ as shown in Theorem~\ref{thm:affineDecoder}, we obtain ${\bf E}[{\bf m}^2]=P=A^2\sigma_{\bf x}^2$, which is consistent with a linear encoder case. Note that for $\alpha=0$, (c) in \eqref{eq:scalarITinequalities} reduces to $\frac{\sigma_{\bf x}^2}{{\sigma_{\bf x}^2\over\sigma_{\bf w}^2}+1}$ with equality, and for $\alpha=1$, (c) in \eqref{eq:scalarITinequalities} reduces to $\frac{\sigma_{\bf x}^2}{{P\over\sigma_{\bf v}^2}+1}$ with equality. Thus, the information theoretic lower bound on the estimation error is $\frac{\sigma_{\bf x}^2}{{P\over\sigma_{\bf v}^2}+{\sigma_{\bf x}^2\over\sigma_{\bf w}^2}+1}$ and it is achievable only for jointly linear encoder and decoder with $A>0$ and $0<\alpha<1$.
This completes the derivation. \qed

\section{Proof of Theorem~\ref{thm:linearEq}} \label{appendixlinearEq} 
Due to Theorem~\ref{thm:linearOpt} and Remark~\ref{rem:encCost}, the encoder cost is lower bounded by
\begin{align*}
J^e \geq \frac{\sigma_{\bf x}^2}{{P\over\sigma_{\bf v}^2}+{\sigma_{\bf x}^2\over\sigma_{\bf w}^2}+1} + \theta P^2 + b^2 \triangleq J^e_{\text{LB}}\,,
\end{align*}
where $P\triangleq{\bf E}[{\bf m}^2]$. Note that the lower bound $J^e_{\text{LB}}$ is achievable when the encoder use linear strategy. The first and second order derivatives of the lower bound $J^e_{\text{LB}}$ are
\begin{align*}
{\mathrm{d}J^e_{\text{LB}}\over\mathrm{d}P} &= -{{\sigma_{\bf x}^2\over\sigma_{\bf v}^2} \over \left({P\over\sigma_{\bf v}^2}+{\sigma_{\bf x}^2\over\sigma_{\bf w}^2}+1\right)^2} + \theta \geq \theta - {{\sigma_{\bf x}^2\over\sigma_{\bf v}^2} \over \left({\sigma_{\bf x}^2\over\sigma_{\bf w}^2}+1\right)^2} \,,\\
{\mathrm{d}^2J^e_{\text{LB}}\over\mathrm{d}P^2} &= {2{\sigma_{\bf x}^2\over\sigma_{\bf v}^4} \over \left({P\over\sigma_{\bf v}^2}+{\sigma_{\bf x}^2\over\sigma_{\bf w}^2}+1\right)^3} > 0  \,.
\end{align*}
If $\theta\geq{{\sigma_{\bf x}^2\over\sigma_{\bf v}^2} \over \left({\sigma_{\bf x}^2\over\sigma_{\bf w}^2}+1\right)^2}$, then ${\mathrm{d}J^e_{\text{LB}}\over\mathrm{d}P}\geq0$, which implies that $J^e_{\text{LB}}$ is an increasing function of $P$, thus $P$ should be selected as $P=0$, i.e., the encoder does not transmit any message. 

Otherwise, i.e., if $\theta<{{\sigma_{\bf x}^2\over\sigma_{\bf v}^2} \over \left({\sigma_{\bf x}^2\over\sigma_{\bf w}^2}+1\right)^2}$, the lower bound can be minimized at the critical point , ${\mathrm{d}J^e_{\text{LB}}\over\mathrm{d}P}=0$, which implies $P^*=\sqrt{1\over\theta{\sigma_{\bf x}^2\over\sigma_{\bf v}^2}}-{{\sigma_{\bf x}^2\over\sigma_{\bf w}^2}+1\over{\sigma_{\bf x}^2\over\sigma_{\bf v}^2}}$. Since $J^e_{\text{LB}}$ is achievable for a linear encoder ${\bf m}=\gamma^e({\bf x})=A{\bf x}$, since $P^*\triangleq{\bf E}[(A^*{\bf x})^2]=(A^*)^2$, the optimal $A$ is obtained as $A^*=\sqrt{\sqrt{1\over\theta{\sigma_{\bf x}^2\over\sigma_{\bf v}^2}}-{{\sigma_{\bf x}^2\over\sigma_{\bf w}^2}+1\over{\sigma_{\bf x}^2\over\sigma_{\bf v}^2}}}$.
\begin{align}
\min\limits_{P} J^e_{\text{LB}} = \min\limits_{P} {\sigma_{\bf x}^2 \over {P\over\sigma_{\bf v}^2}+{\sigma_{\bf x}^2\over\sigma_{\bf w}^2}+1}+ \theta P  \,.
\label{eq:affineEncoderSoft}
\end{align}

Otherwise, i.e., if we have $\theta<{{\sigma_{\bf x}^2\over\sigma_{\bf v}^2} \over \left({\sigma_{\bf x}^2\over\sigma_{\bf w}^2}+1\right)^2}$, the critical point ${\mathrm{d}J^e\over\mathrm{d}P}=0$ when $P=\sqrt{1\over\theta{\sigma_{\bf x}^2\over\sigma_{\bf v}^2}}-{{\sigma_{\bf x}^2\over\sigma_{\bf w}^2}+1\over{\sigma_{\bf x}^2\over\sigma_{\bf v}^2}}$. Thus, the optimal $A$ is $A=\sqrt{\sqrt{1\over\theta{\sigma_{\bf x}^2\over\sigma_{\bf v}^2}}-{{\sigma_{\bf x}^2\over\sigma_{\bf w}^2}+1\over{\sigma_{\bf x}^2\over\sigma_{\bf v}^2}}}$.\\
This completes the derivation. \qed

\section{Proof of Theorem~\ref{thm:dynamic_sol}} \label{appendixdynamic_sol} 
From the system in Fig. \ref{fig:dynamic_setup1}, we know that the observations process $\{{\bf r}_t:~t\in\mathbb{N}_0^n\}$ is given by
\begin{align}
\begin{split}
{\bf r}_t&=\alpha_t{\bf y}_t+(1-\alpha_t){\bf z}_t\\
&\stackrel{(i)}=\alpha_t({\bm \gamma}_t^\epsilon({\bf x}^t)+{\bf v}_t)+(1-\alpha_t)({\bf x}_t+{\bf w}_t)\\
&=\alpha_t(A_t{\bf x}_t+{\bf v}_t)+(1-\alpha_t)({\bf x}_t+{\bf w}_t)\\
&=(\alpha_tA_t+1-\alpha_t){\bf x}_t+\alpha_t{\bf v}_t+(1-\alpha_t){\bf w}_t,~t\in\mathbb{N}_0^n,
\end{split}\label{observations_process}
\end{align}  
where $(i)$ follows from the realization in Fig. \ref{fig:dynamic_setup1} and \eqref{csi}. Moreover, since the minimum error at the decoder at each instant of time is ${\bf E}[({\bf x}_t-\hat{\bf x}_{t|t})^2]$, then, the decoder's cost can be modified as follows:
\begin{align}
\begin{split}
J_{t}^d&={\bf E}[({\bf x}_t-\hat{\bf x}_{t|t})^2]\\
&\stackrel{(i)}={\bf E}[({\bf x}_t-{\bf x}_{t|t-1}-{\bf k_t}({\bf r}_t-(\alpha_tA_t+1-\alpha_t)\hat{\bf x}_{t|t-1}))^2]\\
&\stackrel{\mathclap{(ii)}}{=}{\bf E}[({\bf x}_t-{\bf x}_{t|t-1}-{\bf k}_t((\alpha_tA_t+1-\alpha_t)({\bf x}_t-\hat{\bf x}_{t|t-1})+\alpha_t{\bf v}_t+(1-\alpha_t){\bf w}_t)^2]\\
&={\bf E}[(1-{\bf k}_t(\alpha_tA_t+1-\alpha_t)({\bf x}_t-\hat{\bf x}_{t|t-1})-{\bf k}_t\alpha_t{\bf v}_t-(1-\alpha_t){\bf k}_t{\bf w}_t)^2]\\
&=\left[(1-{\bf k}_t(\alpha_tA_t+1-\alpha_t))^2\Sigma_{t|t-1}+{\bf k}^2_t\alpha^2_t\sigma^2_{{\bf v}_t}+(1-\alpha_t)^2{\bf k}^2_t\sigma^2_{{\bf w}_t}\right]
\end{split}\label{decoders_cost}
\end{align} 
where $(i)$ follows from \eqref{kf}; $(ii)$ follows by substituting in our expression  \eqref{observations_process} and after some simple calculations. 

\begin{remark}\label{remark:equivalent_form} The decoder's cost in \eqref{decoders_cost} although written in different form, is precisely $\Sigma_{t|t}\geq{0}$ because the conditional variance $\Sigma_{t|t}$ is equal to the unconditional in KF algorithm.
\end{remark}

\noindent{\bf Optimization Problem.} We will solve the decoder's optimization problem in \eqref{total_cost} forward in time, starting at time stage zero and moving forward to a fixed time stage $n$. To do it, first we re-formulate it as follows:
\begin{align}
J_{\avertot}^d=\frac{1}{n+1}\min_{\alpha_n}\left\{\min_{\alpha_{n-1}}\left\{\ldots\min_{\alpha_{1}}\left\{\left\{\min_{\alpha_{0}}J_0\right\}+J_1\right\}+\ldots+J_{n-1}\right\}+J_n\right\},\label{nested_opt}
\end{align}
where 
\begin{align}
J_0=&(1-{\bf k}_0(\alpha_0A_0+1-\alpha_0))^2\Sigma_{0|-1}+{\bf k}^2_0\alpha^2_0\sigma^2_{{\bf v}_0}+(1-\alpha_0)^2{\bf k}^2_0\sigma^2_{{\bf w}_0},~\Sigma_{0|-1}=\sigma^2_{{\bf x}_0}\,,\label{initial}\\
J_t=&(1-{\bf k}_t(\alpha_tA_t+1-\alpha_t))^2\Sigma_{t|t-1}+{\bf k}^2_t\alpha^2_t\sigma^2_{{\bf v}_t}+(1-\alpha_t)^2{\bf k}^2_t\sigma^2_{{\bf w}_t}\,.\label{cost_to_go}
\end{align}

We first consider $t=0$. Using the formulation in \eqref{nested_opt}, we want to optimize 
\begin{align}
\min_{\alpha_{0}}J_0.\label{initial_optimization}
\end{align}
Observe that from \eqref{decoders_cost}, by optimizing w.r.t. $\alpha_0$, it is the same as optimizing w.r.t $({\bf k}_0, \alpha_0)$ because ${\bf k}_0$ depends on $\alpha_0$. Hence, we can re-write \eqref{initial_optimization} as 
\begin{align}
\min_{\alpha_{0},~{\bf k}_0}(1-{\bf k}_0(\alpha_0A_0+1-\alpha_t))^2\Sigma^*_{0|-1}+{\bf k}^2_0\alpha^2_0\sigma^2_{{\bf v}_0}+(1-\alpha_0)^2{\bf k}^2_0\sigma^2_{{\bf w}_0},~\Sigma^*_{0|-1}=\sigma^2_{{\bf x}_0}.\label{final_form_t0}
\end{align}
To solve \eqref{final_form_t0}, we first show that it is convex. To do it, we first introduce the auxiliary variables 
\begin{align}
{\bm \phi}_0=\alpha_0{\bf k}_0,~~~{\bm \upsilon}_0=(1-\alpha_0){\bf k}_0.\label{aux_var_t0}
\end{align}
For the choice of \eqref{aux_var_t0}, \eqref{final_form_t0} can be simplified to:
\begin{align}
\min_{{\bm \phi}_{0},~{\bm \upsilon}_0}(1-A_0{\bm \phi}_0-{\bm \upsilon}_0)^2\sigma^2_{{\bf x}_0}+{\bm \phi}_0^2\sigma^2_{{\bf v}_0}+{\bm \upsilon}^2_0\sigma^2_{{\bf w}_0}.\label{final_form_t0_1}
\end{align}
The Hessian matrix that corresponds to the objective function of \eqref{final_form_t0_1}, hereinafter denoted by $\mathbb{H}_0$, can be found as follows: 
\begin{align}
\begin{split}
{\partial J_0^d \over \partial {\bm \phi}_0} &= 2(A_0{\bm \phi}_0+{\bm \upsilon}_0-1)A_0\sigma^2_{{\bf x}_0}+2{\bm \phi}_0\sigma^2_{{\bf v}_0}\,,\quad\qquad\qquad {\partial^2 J_0^d \over \partial {\bm \phi}^2_0}= 2A^2_0\sigma^2_{{\bf x}_0}+2\sigma^2_{{\bf v}_0}\\
{\partial J_0^d \over \partial {\bm \upsilon}_0} &= 2(A_0{\bm \phi}_0+{\bm \upsilon}_0-1)\sigma^2_{{\bf x}_0} + 2{\bm \upsilon}_0\sigma^2_{{\bf w}_0}\,,\quad\quad\qquad\qquad {\partial^2 J_0^d \over \partial {\bm \upsilon}^2_0}= 2\sigma^2_{{\bf x}_0}+2\sigma^2_{{\bf w}_0}\\ 
{\partial J_0^d \over \partial {\bm \upsilon}_0\partial{\bm \phi}_0}&={\partial J_0^d \over \partial{\bm \phi}_0\partial {\bm \upsilon}_0}=2A_0\sigma^2_{{\bf x}_0}.
\end{split}\label{partial_der_0}
\end{align}
Based on \eqref{partial_der_0}, the Hessian matrix $\mathbb{H}_0$ is given as follows
\begin{align}
\mathbb{H}_0=\begin{bmatrix} {\partial^2 J_0^d\over\partial {\bm \phi}_0^2} & {\partial^2 J_0^d\over\partial {\bm \upsilon}_0\partial {\bm \phi}_0} \\
{\partial^2 J_0^d\over\partial {\bm \phi}_0\partial {\bm \upsilon}_0} & {\partial^2 J_0^d\over\partial {\bm \phi}_0^2} \end{bmatrix}=\begin{bmatrix} 2A^2_0\sigma^2_{{\bf x}_0} +2\sigma^2_{{\bf v}_0} & 2A_0\sigma^2_{{\bf x}_0} \\
2A_0\sigma^2_{{\bf x}_0} & 2\sigma^2_{{\bf x}_0} +2\sigma^2_{{\bf w}_0}
\end{bmatrix}.\label{hessian_matrix_0}
\end{align}
It can be easily checked that for any $A_0$, the eigenvalues of  $\mathbb{H}_0$ are non-negative, hence the matrix is positive semi-definite. This in turn implies that $J_0^d$ is jointly convex on $({\bm \phi}_0, {\bm \upsilon}_0)$.\\
Therefore, the optimal solution $\alpha^*_0$ is as follows:
\begin{align}
{\partial J_0^d \over \partial {\bm \phi}_0}={\partial J_0^d \over \partial {\bm \upsilon}_0}=0 &\stackrel{\eqref{partial_der_0}}\Rightarrow -{{\bm \phi}_0\sigma^2_{{\bf v}_0}\over A_0} =-{\upsilon}_0\sigma^2_{{\bf w}_0} \Rightarrow {\sigma^2_{{\bf v}_0}\over A_0\sigma^2_{{\bf w}_0}}=\frac{{\bm \upsilon}_0}{{\bm \phi}_0} \stackrel{\eqref{aux_var_t0}}= {(1-\alpha^*_0){\bf k}_0 \over \alpha^*_0 {\bf k}_0} \nn\\
&\Rightarrow \alpha^*_0 = {A_0\sigma^2_{{\bf w}_0} \over A_0\sigma^2_{{\bf w}_0}+\sigma^2_{{\bf v}_0}} \stackrel{\eqref{kf}}\Rightarrow {\bf k}^*_0 = {A_0\sigma^2_{{\bf w}_0} + \sigma^2_{{\bf v}_0}\over A_0^2\sigma^2_{{\bf w}_0}+\sigma^2_{{\bf v}_0}+{\sigma^2_{{\bf w}_0}\sigma^2_{{\bf v}_0}\over\sigma^2_{{\bf x}_0}}}
= {(A_0\sigma^2_{{\bf w}_0}+\sigma^2_{{\bf v}_0})\sigma^2_{{\bf x}_0}\over (A_0^2\sigma^2_{{\bf w}_0}+\sigma^2_{{\bf v}_0})\sigma^2_{{\bf x}_0}+\sigma^2_{{\bf w}_0}\sigma^2_{{\bf v}_0}}.
\label{optimal_sol:t0}
\end{align} 
Substituting ($\alpha_0^*, {\bf k}_0^*$) obtained in \eqref{optimal_sol:t0} to \eqref{final_form_t0}, we obtain $J_0^{d,*}=\frac{\Sigma_{0|-1}^*\sigma^2_{{\bf w}_0}\sigma^2_{{\bf v}_t}}{(A_t^2\sigma^2_{{\bf w}_0}+\sigma^2_{{\bf v}_0})\Sigma_{0|-1}^*+\sigma^2_{{\bf w}_0}\sigma^2_{{\bf v}_0}}$.\\

Similar to the single-stage case, note that when $A_0<0$, the optimal $\alpha^*_0$ lies outside the feasible region $[0,1]$. Thus, the extreme values of this closed interval should be compared for $A_0<0$. This is done next.

Let $\alpha_0^*=0$. Then, the optimal decoder is $\gamma_0^d({\bf r_0})={\Sigma^*_{0|-1}\over \Sigma_{0|-1}+\sigma_{{\bf w}_0}^2}{\bf r_0}$ by \eqref{kf}, which means that the decoder's cost becomes $J^{d,*}_0={\Sigma_{0|-1}\sigma_{{\bf w}_0}^2 \over \Sigma_{0|-1}+\sigma_{{\bf w}_0}^2}$ again from  \eqref{kf}.

Now let $\alpha_0^*=1$, i.e., a point-to-point communication scenario is considered without side information. Then, the optimal decoder is $\gamma_0^d({\bf r}_0)={A_0\Sigma_{0|-1} \over A_0^2\Sigma_{0|-1}+\sigma_{{\bf v}_0}^2}{\bf r}_0$ by \eqref{kf}, which yields a decoder's cost $J^{d,*}_0={\Sigma_{0|-1}\sigma_{{\bf v}_0}^2 \over A_0^2\Sigma_{0|-1}+\sigma_{{\bf v}_0}^2}$ by \eqref{kf}.

Next, we proceed to $t=1$. Again, using the formulation in \eqref{nested_opt}, this corresponds to the optimization problem 
\begin{align}
\min_{\alpha_{1}}\left\{J_0^{d,*}+J_1^d\right\}\stackrel{(a)}\equiv\min_{\alpha_{1}}J_1^d,\label{initial_optimization_t1}
\end{align}
where $(a)$ follows because $J_0^{d,*}$ is a constant as it is already optimized in time stage  $t=0$.\\
Observe from \eqref{decoders_cost} that by optimizing w.r.t. $\alpha_1$, is the same as optimizing w.r.t $({\bf k}_1, \alpha_1)$ because ${\bf k}_1$ depends on $\alpha_1$. Hence, we can re-write \eqref{initial_optimization} as 
\begin{align}
\min_{\alpha_{1},~{\bf k}_1}(1-{\bf k}_1(\alpha_1A_1+1-\alpha_t))^2\Sigma^*_{1|0}+{\bf k}^2_1\alpha^2_1\sigma^2_{{\bf v}_1}+(1-\alpha_1)^2{\bf k}^2_1\sigma^2_{{\bf w}_1},\label{final_form_t1}
\end{align}
where $\Sigma^*_{1|0}=\beta^2_0\Sigma^*_{0|0}+\sigma^2_{{\bf n}_0}$ by \eqref{kf} hence it is independent of $\alpha_1$. The latter observation stems from the fact that $\Sigma_{0|0}=J_0$ (see Remark \ref{remark:equivalent_form}). Therefore, the procedure, is precisely the same as in time stage $t=0$, with $\sigma^2_{{\bf x}_0}$ replaced by $\Sigma^*_{1|0}$. The final result is given in Table~\ref{tab:optDecoderMulti} when $t=1$. \\
Suppose that for $t=n-1$, the solution is given by $J_{n-1}^{d,*}$ in Table~\ref{tab:optDecoderMulti}. Then, for $t=n$ following the approach of time stage $t=0$, the solution will be given by $J_{n}^{d,*}$ in Table~\ref{tab:optDecoderMulti}.\\ 
Clearly, the average total cost of the decoder in \eqref{total_cost} is the average total time stages of all individual optimal decoder's costs. 
This completes the derivation. \qed

\section{Proof of Proposition~\ref{prop:1}} \label{appendixprop1} 
Observe that if the linear encoder is of the class \eqref{linear_encoder_memory}, then, by definition, the innovations process is obtained as follows:
\begin{align}
\begin{split}
{\bf I}_t&\triangleq{\bf r}_t-{\bf E}[{\bf r}_t|{\bf r}^{t-1}],~t\in\mathbb{N}_0^n\\
&=\alpha_tA_t({\bf x}_t-\hat{\bf x}_{t|t-1})+(1-\alpha_t){\bf x}_t+\alpha_t{\bf v}_t+(1-\alpha_t){\bf w}_t\\
&\qquad\qquad-{\bf E}[\alpha_tA_t({\bf x}_t-\hat{\bf x}_{t|t-1})+(1-\alpha_t){\bf x}_t+\alpha_t{\bf v}_t+(1-\alpha_t){\bf w}_t|{\bf r}^{t-1}]\\
&\stackrel{\mathclap{(i)}}=\alpha_tA_t({\bf x}_t-\hat{\bf x}_{t|t-1})+(1-\alpha_t){\bf x}_t+\alpha_t{\bf v}_t+(1-\alpha_t){\bf w}_t\\
&\qquad\qquad-\alpha_tA_t{\bf x}_{t|t-1}-\alpha_tA_t{\bf E}[\hat{\bf x}_{t|t-1}|{\bf r}^{t-1}]+(1-\alpha_t){\bf x}_{t|t-1}\\
&\stackrel{\mathclap{(ii)}}=\alpha_tA_t({\bf x}_t-\hat{\bf x}_{t|t-1})+(1-\alpha_t){\bf x}_t+\alpha_t{\bf v}_t+(1-\alpha_t){\bf w}_t-(1-\alpha_t)\hat{\bf x}_{t|t-1}\\
&=\text{innovations in \eqref{kf}},
\end{split}
\end{align}
where $(i)$ follows from the fact that the expectation is a linear operator, $C_t$ is a constant and that the noise process $\{{\bf v}_t:~t\in\mathbb{N}_0^n\}$ and  $\{{\bf w}_t:~t\in\mathbb{N}_0^n\}$ are zero mean mutually independent processes independent of everything; $(ii)$ follows from the tower property of conditional expectation or simply because $\hat{\bf x}_{t|t-1}$ is ${\bf r}^{t-1}$-measurable. Since the innovations process generates the same information with a linear memoryless encoder, then, the results obtained in Table \ref{tab:optDecoderMulti} will also applied for this class of linear encoders. \qed

\section{Proof of Theorem~\ref{theorem:optimality_linear_policies_dynamic}} \label{appendixoptimality_linear_policies_dynamic} 
The proof is obtained using first an information theoretic lower bound on the estimation error, and then, by showing that this lower bound is achievable when the players jointly utilize linear strategies.
\par Since the decoder's received signal at each instant of time is ${\bf r}_t=\alpha_t({\bf m}_t+{\bf v}_t) + (1-\alpha_t)({\bf x}_t+{\bf w}_t)$, the conditional mean and conditional variance (power) of $\{{\bf r}_t:~t\in\mathbb{N}_0^n\}$ are as follows:\footnote{Recall that conditional variance is equivalent with the unconditional for jointly Gaussian processes.}
\begin{align}
{\bf E}[{\bf r}_t|{\bf r}^{t-1}]&=\alpha_t\hat{\bf m}_{t|t-1}+(1-\alpha_t)\hat{\bf x}_{t|t-1}\label{cond_mean_r}\\
{\bf E}\left[({\bf r}_t-{\bf E}[{\bf r}_t|{\bf r}^{t-1}])^2|{\bf r}^{t-1}\right]\equiv{\bf E}\left[({\bf r}_t-{\bf E}[{\bf r}_t|{\bf r}^{t-1}])^2\right]&=\alpha^2_t{\bf E}\left[({\bf m}_t-\hat{\bf m}_{t|t-1})^2\right]+(1-\alpha_t)^2{\bf E}\left[({\bf x}_t-\hat{\bf x}_{t|t-1})^2\right]+\alpha_t^2{\bf E}\left[{\bf v}_t^2\right]\nonumber\\
&\quad + (1-\alpha_t)^2{\bf E}\left[{\bf w}_t^2\right] +2\alpha_t(1-\alpha_t){\bf E}\left[({\bf m}_t-\hat{\bf m}_{t|t-1})({\bf x}_t-\hat{\bf x}_{t|t-1})|{\bf r}^{t-1}\right]\nn\\
&= \underbrace{\alpha_t^2 P_t + (1-\alpha_t)^2\Sigma_{t|t-1} + 2\alpha_t(1-\alpha_t){\bf E}\left[({\bf m}_t-\hat{\bf m}_{t|t-1})({\bf x}_t-\hat{\bf x}_{t|t-1})\right]}_{\text{signal power}}\nonumber\\
&\qquad+\underbrace{\alpha_t^2\sigma^2_{{\bf v}_t} + (1-\alpha_t)^2\sigma^2_{{\bf w}_t}}_{\text{ Gaussian noise power}} \,.\label{cond_variance_r}
\end{align}
Next, we give the information theoretic characterization of the average total feedback capacity and the corresponding information feedback capacity per time instant between $\{{\bf x}_t:~t\in\mathbb{N}_0^n\}$ and $\{{\bf r}_t:~t\in\mathbb{N}_0^n\}$, denoted hereinafter by $C^{fb}_{\avertot}(\{P_t\}_{t=0}^n)$ and $C^{fb}_{t}(P_t)$, respectively.  
\begin{align}
C^{fb}_{\avertot}(\{P_t\}_{t=0}^n) &= \sup_{\substack{{\bf E}\left[({\bf m}_t-\hat{\bf m}_{t|t-1})^2\right]=P_t,~\forall{t}}}\frac{1}{n+1} I({\bf x}^n\rightarrow{\bf r}^n)\nonumber\\
&\stackrel{\mathclap{(i)}}{=}\sup_{\substack{ {\bf E}\left[({\bf m}_t-\hat{\bf m}_{t|t-1})^2\right]=P_t,~\forall{t}}}\frac{1}{n+1}\sum_{t=0}^nI({\bf x}^t;{\bf r}_t|{\bf r}^{t-1})\nonumber\\
&=\sup_{\substack{{\bf E}\left[({\bf m}_t-\hat{\bf m}_{t|t-1})^2\right]=P_t,~\forall{t}}}\frac{1}{n+1}\sum_{t=0}^n\left[h({\bf r}_t|{\bf r}^{t-1})-h({\bf r}_t|{\bf r}^{t-1},{\bf x}^t)\right]\nonumber\\
&\stackrel{\mathclap{(ii)}}{=}\sup_{\substack{{\bf E}\left[({\bf m}_t-\hat{\bf m}_{t|t-1})^2\right]=P_t,~\forall{t}}}\frac{1}{n+1}\sum_{t=0}^n\left[h^G({\bf r}_t|{\bf r}^{t-1})-h^G({\bf r}_t|{\bf r}^{t-1},{\bf x}^t)\right]\nonumber\\
&\stackrel{\mathclap{(iii)}}{=}\frac{1}{n+1}\sum_{t=0}^nC^{fb}_{t}(P_t),
\label{information_feedback_capacity_total_average}
\end{align}
where
\begin{align}
C_t^{fb}(P_t)=\frac{1}{2}\log\left(1+{\alpha_t^2 P_t + (1-\alpha_t)^2\Sigma_{t|t-1} + 2\alpha_t(1-\alpha_t){\bf E}\left[({\bf m}_t-\hat{\bf m}_{t|t-1})({\bf x}_t-\hat{\bf x}_{t|t-1})\right] \over \alpha_t^2\sigma^2_{{\bf v}_t} + (1-\alpha_t)^2\sigma^2_{{\bf w}_t}}\right), \,~t\in\mathbb{N}_0^n, \label{information_feedback_capacity_per_instant}
\end{align}
$h(\cdot|\cdot)<\infty$ is the conditional differential entropy that is assumed to be finite, $(i)$ follows by definition of directed information \cite{massey:1990}; $(ii)$ follows because the noise is additive Gaussian; $(iii)$ follows because $h^G({\bf r}_t|{\bf r}^{t-1})$ can be computed from \eqref{cond_variance_r} and $h^G({\bf r}_t|{\bf r}^{t-1},{\bf x}^t)=\frac{1}{2}\log(2\pi{e})\left(\alpha_t^2\sigma^2_{{\bf v}_t} + (1-\alpha_t)^2\sigma^2_{{\bf w}_t}\right)$ for each time instant.

Next, we describe an interesting structural result of both $C^{fb}_{\avertot}(\{P_t\}_{t=0}^n)$ and $C_t^{fb}(P_t)$.
\begin{proposition}\label{prop:structural_res_fb_cap}(Structural result) Define the following information characterization of the information feedback capacity
	\begin{align}
	\bar{C}^{fb}_{\avertot}(\{P_t\}_{t=0}^n)=\sup_{\substack{ {\bf E}\left[({\bf m}_t-\hat{\bf m}_{t|t-1})^2\right]=P_t,~\forall{t}}}\frac{1}{n+1}\sum_{t=0}^nI({\bf x}_t;{\bf r}_t|{\bf r}^{t-1}).\label{structural_fb_cap}
	\end{align} 
	Then, for the same $\{{\bf x}_t:~t\in\mathbb{N}_0^n\}$ and $\{{\bf r}_t:~t\in\mathbb{N}_0^n\}$ used to obtain $C^{fb}_{\avertot}(\{P_t\}_{t=0}^n)$ and $C_t^{fb}(P_t)$, we have that $\bar{C}^{fb}_{\avertot}(\{P_t\}_{t=0}^n)=C^{fb}_{\avertot}(\{P_t\}_{t=0}^n)=\frac{1}{n+1}\sum_{t=0}^nC^{fb}_{t}(P_t)$ where $C_t^{fb}(P_t)$=\eqref{information_feedback_capacity_per_instant} for any $t$.
\end{proposition}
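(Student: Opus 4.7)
The second equality $C^{fb}_{\avertot}(\{P_t\}_{t=0}^n)=\frac{1}{n+1}\sum_{t=0}^n C_t^{fb}(P_t)$ is already established in \eqref{information_feedback_capacity_total_average}, so the plan reduces to showing $\bar{C}^{fb}_{\avertot}(\{P_t\}_{t=0}^n)=C^{fb}_{\avertot}(\{P_t\}_{t=0}^n)$. The workhorse identity is the chain-rule decomposition
\begin{align*}
I({\bf x}^t;{\bf r}_t|{\bf r}^{t-1}) = I({\bf x}_t;{\bf r}_t|{\bf r}^{t-1}) + I({\bf x}^{t-1};{\bf r}_t|{\bf r}^{t-1},{\bf x}_t),
\end{align*}
which I would apply inside both suprema after averaging over $t$. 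The inequality $\bar{C}^{fb}_{\avertot}\leq C^{fb}_{\avertot}$ then follows immediately: the second conditional mutual information on the right is non-negative, so for every admissible encoder $I({\bf x}_t;{\bf r}_t|{\bf r}^{t-1})\leq I({\bf x}^t;{\bf r}_t|{\bf r}^{t-1})$, and this inequality survives the supremum.

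For the reverse direction $\bar{C}^{fb}_{\avertot}\geq C^{fb}_{\avertot}$, I would exhibit a single encoder that simultaneously attains $C^{fb}_{\avertot}$ and drives the residual term $I({\bf x}^{t-1};{\bf r}_t|{\bf r}^{t-1},{\bf x}_t)$ to zero. By the Gaussian maximum-entropy argument embedded in step $(ii)$ of \eqref{information_feedback_capacity_total_average}, the supremum defining $C_t^{fb}(P_t)$ is achieved when ${\bf r}_t\mid {\bf r}^{t-1}$ is Gaussian, which is obtained by a linear encoder. Invoking Proposition~\ref{prop:1}, the linear innovations encoder $\gamma^e_t({\bf x}^t,{\bf r}^{t-1})=A_t({\bf x}_t-\hat{\bf x}_{t|t-1})$ generates the same innovations process (and hence the same conditional second-order statistics in \eqref{information_feedback_capacity_per_instant}) as the memoryless encoder used to derive $C_t^{fb}(P_t)$, so it attains the same capacity value. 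For this encoder, $\hat{\bf x}_{t|t-1}$ is ${\bf r}^{t-1}$-measurable, so ${\bf m}_t$ is a deterministic function of $({\bf x}_t,{\bf r}^{t-1})$; combining with ${\bf r}_t=\alpha_t({\bf m}_t+{\bf v}_t)+(1-\alpha_t)({\bf x}_t+{\bf w}_t)$ and the independence of $({\bf v}_t,{\bf w}_t)$ from $({\bf x}^t,{\bf r}^{t-1})$ yields the Markov chain ${\bf x}^{t-1}\to({\bf x}_t,{\bf r}^{t-1})\to{\bf r}_t$. This forces $I({\bf x}^{t-1};{\bf r}_t|{\bf r}^{t-1},{\bf x}_t)=0$, so the two conditional mutual informations coincide at this encoder, which closes the chain of inequalities.

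The main obstacle I anticipate is the pairing of the two suprema at a common maximizer: $C^{fb}_{\avertot}$ and $\bar{C}^{fb}_{\avertot}$ are defined over the same feasible class of encoders but through different information measures, so one must rule out the possibility that one of them is maximized by an encoder for which the Markov chain above fails. This is precisely where Proposition~\ref{prop:1} does the heavy lifting: it certifies that restricting attention to linear innovations encoders incurs no loss of optimality in $C^{fb}_{\avertot}$, while also structurally guaranteeing the Markov property that makes the residual term $I({\bf x}^{t-1};{\bf r}_t|{\bf r}^{t-1},{\bf x}_t)$ vanish. A secondary subtlety is keeping the power constraint $E[({\bf m}_t-\hat{\bf m}_{t|t-1})^2]=P_t$ consistent across the two formulations, but since $P_t$ is defined directly in terms of the innovation of ${\bf m}_t$ the same feasible set applies to both suprema, and no adjustment is needed.
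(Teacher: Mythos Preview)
Your proposal is correct. The paper's own proof is a single sentence---``This follows by computing $\bar{C}^{fb}_{[0,n]}$ at each instant of time''---which amounts to re-running the entropy calculation in \eqref{information_feedback_capacity_total_average} with $I({\bf x}_t;{\bf r}_t|{\bf r}^{t-1})$ in place of $I({\bf x}^t;{\bf r}_t|{\bf r}^{t-1})$ and observing that, for the Gaussian optimizing encoder, $h({\bf r}_t|{\bf r}^{t-1},{\bf x}_t)$ already equals the noise entropy, so the same closed form $C_t^{fb}(P_t)$ falls out. Your route is structurally different: rather than recomputing, you split $I({\bf x}^t;{\bf r}_t|{\bf r}^{t-1})$ via the chain rule and then kill the residual $I({\bf x}^{t-1};{\bf r}_t|{\bf r}^{t-1},{\bf x}_t)$ by exhibiting the Markov chain ${\bf x}^{t-1}\to({\bf x}_t,{\bf r}^{t-1})\to{\bf r}_t$ for the innovations encoder. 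Both arguments hinge on the same fact---that the optimizing encoder makes ${\bf m}_t$ a function of $({\bf x}_t,{\bf r}^{t-1})$---but yours makes the two-sided inequality and the role of Proposition~\ref{prop:1} explicit, whereas the paper leaves all of this implicit in the word ``computing''. Your version is the more informative of the two; the paper's buys brevity at the cost of leaving the reader to reconstruct exactly what you wrote.
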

\begin{proof}
	This follows by computing $\bar{C}^{fb}_{[0,n]}$ at each instant of time.
\end{proof}

Next, we derive the lower bound on the average total estimation error. Before doing it,  we first consider a lower bound on the estimation error at each time instant obtained forward in time.  To do it, we consider the following inequality:
\begin{align}
I({\bf x}^n\rightarrow{\bf r}^n)=\sum_{t=0}^nI({\bf x}^t;{\bf r}_t|{\bf r}^{t-1})\stackrel{(\ast)}\geq\sum_{t=0}^nI({\bf x}_t;{\bf r}_t|{\bf r}^{t-1})\label{useful_structural_inequality}
\end{align}
where $(\ast)$ follows by definition of directed information.
Observe that per time instant, the following series of inequalities hold: 
\begin{align}
I({\bf x}_t;{\bf r}_t|{\bf r}^{t-1})&=h({\bf x}_t|{\bf r}^{t-1})-h({\bf x}_t|{\bf r}^{t})=h({\bf x}_t|{\bf r}^{t-1})-h({\bf x}_t-{\bf E}[{\bf x}_t|{\bf r}^{t}]|{\bf r}^t)\nonumber\\
&\stackrel{\mathclap{(\star)}}\geq{h}({\bf x}_t|{\bf r}^{t-1})-h({\bf x}_t-{\bf E}[{\bf x}_t|{\bf r}^{t}])\nonumber\\
&\stackrel{\mathclap{(\star\star)}}=\frac{1}{2}\log{2\pi{e}}\Sigma_{t|t-1}-\frac{1}{2}\log{2\pi{e}}J_t^d=\frac{1}{2}\log\left(\frac{\Sigma_{t|t-1}}{J_t^d}\right),~\Sigma_{0|-1}=\sigma^2_{{\bf x}_0},~\forall{t},\nonumber\\
\Longrightarrow&{J}_t^d\geq\Sigma_{t|t-1}2^{-2I({\bf x}_t;{\bf r}_t|{\bf r}^{t-1})}\stackrel{(\star\star\star)}\geq\Sigma_{t|t-1}2^{-2C_t^{fb}}\nonumber\\
&\stackrel{\mathclap{(\star\star\star\star)}}{=}\;\;\Sigma_{t|t-1}2^{-2\frac{1}{2}\log\left(1+{\alpha_t^2 P_t + (1-\alpha_t)^2\Sigma_{t|t-1} + 2\alpha_t(1-\alpha_t){\bf E}\left[({\bf m}_t-\hat{\bf m}_{t|t-1})({\bf x}_t-\hat{\bf x}_{t|t-1})\right] \over \alpha_t^2\sigma^2_{{\bf v}_t} + (1-\alpha_t)^2\sigma^2_{{\bf w}_t}}\right)}\nonumber\\
&=\frac{\Sigma_{t|t-1}}{1+{\alpha_t^2 P_t + (1-\alpha_t)^2\Sigma_{t|t-1} + 2\alpha_t(1-\alpha_t){\bf E}\left[({\bf m}_t-\hat{\bf m}_{t|t-1})({\bf x}_t-\hat{\bf x}_{t|t-1})\right] \over \alpha_t^2\sigma^2_{{\bf v}_t} + (1-\alpha_t)^2\sigma^2_{{\bf w}_t}}}\stackrel{(\star\star\star\star\star)}\geq\mbox{eq. \eqref{optimal_estimation_error}},~\mbox{for any $t$}\label{inequalities_series}
\end{align}
where $(\star)$ follows because conditioning reduces entropy; $(\star\star)$ follows because the source process is Gauss-Markov driven by additive Gaussian noise whereas $h({\bf x}_t-\hat{\bf x}_{t|t})$ is maximized if and only if $h({\bf x}_t-\hat{\bf x}_{t|t})={h}^G({\bf x}_t-\hat{\bf x}_{t|t})$; $(\star\star\star)$ follows because $I({\bf x}_t;{\bf r}_t|{\bf r}^{t-1})\leq\sup_{{\bf E}[({\bf m}_t-\hat{\bf m}_{t|t-1})^2]={P}_t}I({\bf x}_t;{\bf r}_t|{\bf r}^{t-1})$ for any $t$; $(\star\star\star\star)$ follows from Proposition \eqref{prop:structural_res_fb_cap} and \eqref{information_feedback_capacity_per_instant}; $(\star\star\star\star\star)$ is obtained using the following series of inequalities:
\begin{align*}
&1+{\alpha_t^2 P_t + (1-\alpha_t)^2\Sigma_{t|t-1} + 2\alpha_t(1-\alpha_t){\bf E}\left[({\bf m}_t-\hat{\bf m}_{t|t-1})({\bf x}_t-\hat{\bf x}_{t|t-1})\right] \over \alpha_t^2\sigma^2_{{\bf v}_t} + (1-\alpha_t)^2\sigma^2_{{\bf w}_t}}\nonumber\\
&\overset{(p1)}{\leq} 1+{\alpha_t^2 P_t + (1-\alpha_t)^2\Sigma_{t|t-1} + 2\alpha_t(1-\alpha_t)\sqrt{P_t\Sigma_{t|t-1}} \over \alpha_t^2\sigma^2_{{\bf v}_t} + (1-\alpha_t)^2\sigma^2_{{\bf w}_t}} \\
&\overset{(p2)}{\leq} 1+{\alpha_t P_t\over\alpha_t^2\sigma_{{\bf v}_t}^2}+{(1-\alpha_t)^2\Sigma_{t|t-1}\over(1-\alpha_t)^2\sigma_{{\bf w}_t}^2} \,,
\end{align*} 
where $(p1)$ holds due to the Cauchy-Schwarz inequality;  $(p2)$ holds because of the inequality in the derivation of Theorem~\ref{thm:linearOpt} , i.e., ${\eta_t \over \upsilon_t} + {\theta_t \over \phi_t}\geq{(\sqrt{\eta_t}+\sqrt{\theta_t})^2 \over \upsilon_t+\phi_t}$ for positive $\eta_t,~\theta_t,\upsilon_t,~\phi_t$ with $\eta_t=\alpha_t^2P_t$, $\theta_t=(1-\alpha_t)^2\Sigma_{t|t-1}$, $\upsilon_t=\alpha_t^2\sigma_{{\bf v}_t}^2$, and $\phi_t=(1-\alpha_t)^2\sigma^2_{{\bf w}_t}$.
	
In \eqref{inequalities_series}, the first inequality holds with equality if and only if $({\bf x}^n,{\bf r}^n)$ are jointly Gaussian which is the case when the encoder is linear with noiseless feedback; $(\star\star\star\star\star)$ holds with equality for $0<\alpha_t<1$ when $\sqrt{\eta_t}\phi_t=\sqrt{\theta_t}\upsilon_t \Rightarrow \alpha_t\sqrt{P_t}(1-\alpha_t)^2\sigma_{{\bf w}_t}^2=(1-\alpha_t)\Sigma_{t|t-1}\alpha_t^2\sigma_{{\bf v}_t}^2 \Rightarrow \sqrt{P_t}={\alpha_t\over{1}-\alpha_t}\Sigma_{t|t-1}{\sigma_{{\bf v}_t}^2\over\sigma_{{\bf w}_t}^2}$ for any $t$. Since from Proposition \ref{prop:1} we showed that $\alpha_t^*={A_t^*\sigma_{{\bf w}_t}^2 \over A_t^*\sigma_{{\bf w}_t}^2+\sigma_{{\bf v}_t}^2}$ (from Table \ref{tab:optDecoderMulti}) for an innovations encoder $\gamma_t^e({\bf x}^t,{\bf r}^{t-1})=A_t({\bf x}_t-\hat{\bf x}_{t|t-1})$, we obtain ${\bf E}\left[({\bf m}_t-\hat{\bf m}_{t|t-1})^2\right]=P_t=A_t^2\Sigma_{t|t-1}$, which is consistent with a linear encoder with a noiseless feedback (innovations encoder). Note that for $\alpha_t=0$, inequality $(\star\star\star\star\star)$ in \eqref{inequalities_series} reduces to $\frac{\Sigma_{t|t-1}}{{\Sigma_{t|t-1}\over\sigma_{{\bf w}_t}^2}+1}$ that also holds with equality, and for $\alpha_t=1$, $(\star\star\star\star\star)$ in \eqref{inequalities_series} reduces to $\frac{\Sigma_{t|t-1}}{{P_t\over\sigma_{{\bf v}_t}^2}+1}$ that also holds with equality. Thus, the information theoretic lower bound on the estimation error at each instant of time is given by \eqref{optimal_estimation_error} and it is achievable at each instance of time only for jointly linear encoder and decoder with $A_t>0$ and $0<\alpha_t<1$.

Thus, we have proved that at each instant of time going forward in time, the information theoretic lower bound on the estimation error is achievable only for jointly linear encoder and decoder.

 The final result is obtained once we take the average total value of the estimation error at each instant of time. This completes the derivation. \qed

\section{Proof of Theorem~\ref{theorem:enc_sol_soft}} \label{appendixenc_sol_soft}
We observe that the optimization variables of interest in \eqref{enc_optimization_soft} are $\{A_t^2:~t\in\mathbb{N}_0^n\}$ hence we can introduce the decision variables $\{\mu_t=A_t^2:~t\in\mathbb{N}_0^n\}$ which are non-negative variables. Hence, \eqref{enc_optimization_soft} can be cast as follows:
\begin{align}
J_{\avertot}^{e}=\min\limits_{\mu_t\geq{0},~t\in\mathbb{N}_0^n}\frac{1}{n+1}\sum_{t=0}^n\left[{\Sigma^*_{t|t-1}\sigma^2_{{\bf w}_t}\sigma^2_{{\bf v}_t}\over (\mu_t\sigma^2_{{\bf w}_t}+\sigma^2_{{\bf v}_t})\Sigma^*_{t|t-1}+\sigma^2_{{\bf w}_t}\sigma^2_{{\bf v}_t}}+\theta_t\mu_t\Sigma_{t|t-1}+b_t^2\right],~\Sigma^*_{0|-1}=\sigma^2_{{\bf x}_0}.\label{enc_optimization_soft_alternative}
\end{align}
To solve \eqref{enc_optimization_soft_alternative}, we employ again Lagrange multipliers and forward induction. First, we write the augmented Lagrangian problem as follows
\begin{align}
{\cal L}_{\avertot}^e(\{f_t\}_{t=0}^n,~\{\mu_t\}_{t=0}^n)=&\frac{1}{n+1}\sum_{t=0}^n\Big[{\Sigma^*_{t|t-1}\sigma^2_{{\bf w}_t}\sigma^2_{{\bf v}_t}\over (\mu_t\sigma^2_{{\bf w}_t}+\sigma^2_{{\bf v}_t})\Sigma^*_{t|t-1}+\sigma^2_{{\bf w}_t}\sigma^2_{{\bf v}_t}}+\theta_t\mu_t\Sigma_{t|t-1}+b_t^2-f_t\mu_t\Big],~\Sigma^*_{0|-1}=\sigma^2_{{\bf x}_0}.\label{aug_lagrangian_soft}
\end{align}
The first order derivative test, the complementary slackness and the primal and dual feasibility conditions, respectively, are derived as follows:
\begin{align}
&{\partial{\cal L}_{\avertot}^e(\{f_t\}_{t=0}^n,~\{\mu_t\}_{t=0}^n)\over\partial\mu_t}\Bigg{|}_{\substack{\mu_t=\mu_t^*\\f_t=f_t^{*}}}=0,~t=0,1,\ldots,n\label{stationarity_1_soft}\\
&f_t\mu_t=0,~\forall{t},\label{complementary_slackness_soft}\\
&\mu_t\geq{0},~\forall{t},\label{primal_feasibility_soft}\\
&f_t\geq{0},~\forall{t}.\label{dual_feasibility_soft}
\end{align}
Next, we optimize forward in time ${\cal L}^e_{\avertot}(\cdot)$ and study every possible scenario depending of the active variables.\\
\underline{t=0:}
\begin{align}
\begin{split}
&{\partial{\cal L}_{\avertot}^e(\{\mu_t\}_{t=0}^n)\over\partial\mu_0}\Bigg{|}_{\substack{\mu_0=\mu_0^*\\f_0=f_0^{*}}}=0\\
\Longrightarrow&\left[-\frac{1}{\sigma^2_{{\bf v}_0}\left(\frac{\mu^*_0}{\sigma^2_{{\bf v}_0}}+\frac{1}{\sigma^2_{{\bf w}_0}}+\frac{1}{\sigma^2_{{\bf x}_0}}\right)^2}\right]+\theta_0\sigma^2_{{\bf x}_0}-f_0^{*}=0\Longrightarrow\theta_0=\frac{f_0^{*}}{\sigma^2_{{\bf x}_0}}+\left[\frac{1}{\sigma^2_{{\bf v}_0}\sigma^2_{{\bf x}_0}\left(\frac{\mu^*_0}{\sigma^2_{{\bf v}_0}}+\frac{1}{\sigma^2_{{\bf w}_0}}+\frac{1}{\sigma^2_{{\bf x}_0}}\right)^2}\right].
\end{split}\label{time_stage_0_soft}
\end{align}
Next, we check possible cases to obtain our results when ${\theta_t}>0$ is given.\\
{\bf Case 1:} Let $\mu_0^*=0$. Then from \eqref{complementary_slackness_soft}, $f_0^{*}\geq{0}$, which in turn implies from \eqref{time_stage_0_soft} that
\begin{align}
\theta_0=\frac{f_0^{*}}{\sigma^2_{{\bf x}_0}}+\left[\frac{1}{\sigma^2_{{\bf v}_0}\sigma^2_{{\bf x}_0}\left(\frac{1}{\sigma^2_{{\bf w}_0}}+\frac{1}{\sigma^2_{{\bf x}_0}}\right)^2}\right]\geq\frac{1}{\sigma^2_{{\bf v}_0}\sigma^2_{{\bf x}_0}\left(\frac{1}{\sigma^2_{{\bf w}_0}}+\frac{1}{\sigma^2_{{\bf x}_0}}\right)^2}\equiv\theta_0^{\prime}.\label{time_Stage_0_soft_case_1}
\end{align}
{\bf Case 2:} Now assume that $\mu_0^*>0$. Then, from \eqref{complementary_slackness_soft} we obtain that $f_0^{*}=0$, which implies from \eqref{time_stage_0_soft} that
\begin{align}
\theta_0=\frac{1}{\sigma^2_{{\bf v}_0}\sigma^2_{{\bf x}_0}\left(\frac{\mu^*_0}{\sigma^2_{{\bf v}_0}}+\frac{1}{\sigma^2_{{\bf w}_0}}+\frac{1}{\sigma^2_{{\bf x}_0}}\right)^2}<\theta_0^{\prime}.\label{time_Stage_0_soft_case_2}
\end{align}
Moreover, solving in \eqref{time_Stage_0_soft_case_2} the equation w.r.t. $\mu_0^*$ we obtain
\begin{align}
\mu_0^*=\frac{\sigma^2_{{\bf v}_0}}{\sqrt{\theta_0\sigma^2_{{\bf x}_0}\sigma^2_{{\bf v}_0}}}-\frac{\sigma^2_{{\bf v}_0}}{\sigma^2_{{\bf x}_0}}\left(\frac{\sigma^2_{{\bf x}_0}}{\sigma^2_{{\bf w}_0}}+1\right).\label{lagrangian_time_Stage_0_case_2}
\end{align}
Clearly, from the first order derivative in \eqref{time_stage_0_soft}, we can easily see that the second derivative w.r.t. to $\mu^*_0$ is positive hence the function is convex and the optimal solution at this stage is global. \\
\underline{t=1:}
\begin{align}
\begin{split}
&{\partial{\cal L}_{\avertot}^e(\mu_{0}^*,~\{\mu_t\}_{t=1}^n)\over\partial\mu_1}\Bigg{|}_{\substack{\mu_1=\mu_1^*\\f_1=f_1^{*}}}=0\\
\Longrightarrow&\left[-\frac{1}{\sigma^2_{{\bf v}_1}\left(\frac{\mu^*_1}{\sigma^2_{{\bf v}_1}}+\frac{1}{\sigma^2_{{\bf w}_1}}+\frac{1}{\Sigma^*_{1|0}}\right)^2}\right]+\theta_1\Sigma^*_{1|0}-f_1^{*}=0\Longrightarrow\theta_1=\frac{f_1^{*}}{\Sigma^*_{1|0}}+\left[\frac{1}{\sigma^2_{{\bf v}_0}\Sigma^*_{1|0}\left(\frac{\mu^*_1}{\sigma^2_{{\bf v}_1}}+\frac{1}{\sigma^2_{{\bf w}_1}}+\frac{1}{\Sigma^*_{1|0}}\right)^2}\right].
\end{split}\label{time_stage_1_soft}
\end{align}
At this stage we note that $\Sigma^*_{1|0}$ is independent of $\mu_1^*$ because its optimal solution depends on $\mu_0^*$ that is already obtained at time stage $0$.  Hence, under this observation, we can follow precisely the approached followed in time stage zero which will give 
Next, we check possible cases to obtain our results.\\
{\bf Case 1:} Let $\mu_1^*=0$. Then from \eqref{complementary_slackness_soft}, $f_1^{*}\geq{0}$, which in turn implies from \eqref{time_stage_1_soft} that
\begin{align}
\theta_1=\frac{f_1^{*}}{\Sigma^*_{1|0}}+\left[\frac{1}{\sigma^2_{{\bf v}_1}\Sigma^*_{1|0}\left(\frac{1}{\sigma^2_{{\bf w}_1}}+\frac{1}{\Sigma^*_{1|0}}\right)^2}\right]\geq\frac{1}{\sigma^2_{{\bf v}_1}\Sigma^*_{1|0}\left(\frac{1}{\sigma^2_{{\bf w}_1}}+\frac{1}{\Sigma^*_{1|0}}\right)^2}\equiv\theta_t^\prime.\label{time_Stage_1_soft_case_1}
\end{align}
{\bf Case 2:} Now assume that $\mu_1^*>0$. Then, from \eqref{complementary_slackness_soft} we obtain that $f_1^{*}=0$, which implies from \eqref{time_stage_1_soft} that
\begin{align}
\theta_1=\frac{1}{\sigma^2_{{\bf v}_1}\Sigma^*_{1|0}\left(\frac{\mu^*_1}{\sigma^2_{{\bf v}_1}}+\frac{1}{\sigma^2_{{\bf w}_1}}+\frac{1}{\Sigma^*_{1|0}}\right)^2}<\theta_1^\prime.\label{time_Stage_1_soft_case_2}
\end{align}
Moreover, solving the equality in \eqref{time_Stage_1_soft_case_2} w.r.t. $\mu_1^*$ we obtain
\begin{align}
\mu_1^*=\frac{\sigma^2_{{\bf v}_1}}{\sqrt{\theta_1\Sigma^*_{1|0}\sigma^2_{{\bf v}_1}}}-\frac{\sigma^2_{{\bf v}_1}}{\Sigma^*_{1|0}}\left(\frac{\Sigma^*_{1|0}}{\sigma^2_{{\bf w}_1}}+1\right).\label{lagrangian_time_Stage_1_case_2}
\end{align}
Clearly, from the first order derivative in \eqref{time_stage_1_soft}, we can easily see that the second derivative w.r.t. to $\mu^*_1$ is positive hence the function is convex and the optimal solution at this stage is global. \\
Now suppose that at time $n-1$ the optimal solution of $\mu^*_{n-1}$, for the possible cases is as follows:\\
{\bf Case 1:} Let $\mu_{n-1}^*=0$. Then from \eqref{complementary_slackness_soft}, $f_{n-1}^{*}\geq{0}$, which in turn implies that
\begin{align}
\theta_{n-1}=\frac{f_{n-1}^{*}}{\Sigma^*_{n-1|n-2}}+\left[\frac{1}{\sigma^2_{{\bf v}_{n-1}}\Sigma^*_{n-1|n-2}\left(\frac{1}{\sigma^2_{{\bf w}_{n-1}}}+\frac{1}{\Sigma^*_{n-1|n-2}}\right)^2}\right]\geq\frac{1}{\sigma^2_{{\bf v}_{n-1}}\Sigma^*_{n-1|n-2}\left(\frac{1}{\sigma^2_{{\bf w}_{n-1}}}+\frac{1}{\Sigma^*_{n-1|n-2}}\right)^2}\equiv\theta_{n-1}^{\prime}.\label{time_Stage_n-1_soft_case_1}
\end{align}
{\bf Case 2:} Now assume that $\mu_{n-1}^*>0$. Then, from \eqref{complementary_slackness_soft} we obtain that $f_{n-1}^{*}=0$, which implies that
\begin{align}
\theta_{n-1}=\frac{1}{\sigma^2_{{\bf v}_{n-1}}\Sigma^*_{n-1|n-2}\left(\frac{\mu^*_{n-1}}{\sigma^2_{{\bf v}_{n-1}}}+\frac{1}{\sigma^2_{{\bf w}_{n-1}}}+\frac{1}{\Sigma^*_{n-1|n-2}}\right)^2}<\theta_{n-1}^{\prime}.\label{time_Stage_n-1_soft_case_2}
\end{align}
Moreover, solving the equation in\eqref{time_Stage_n-1_soft_case_2} w.r.t. $\mu_{n-1}^*$ we obtain
\begin{align}
\mu_{n-1}^*=\frac{\sigma^2_{{\bf v}_{n-1}}}{\sqrt{\theta_{n-1}\Sigma^*_{n-1|n-2}\sigma^2_{{\bf v}_{n-1}}}}-\frac{\sigma^2_{{\bf v}_{n-1}}}{\Sigma^*_{n-1|n-2}}\left(\frac{\Sigma^*_{n-1|n-2}}{\sigma^2_{{\bf w}_{n-1}}}+1\right).\label{lagrangian_time_Stage_n-1_case_2}
\end{align}
Then, at time stage $t=n$, we can obtain following the same argument as in time $t=1$ that the followin cases hold. \\
{\bf Case 1:} Let $\mu_{n}^*=0$. Then from \eqref{complementary_slackness_soft}, $f_{n}^{*}\geq{0}$, which in turn implies that
\begin{align}
\theta_{n}=\frac{f_{n}^{*}}{\Sigma^*_{n|n-1}}+\left[\frac{1}{\sigma^2_{{\bf v}_{n}}\Sigma^*_{n|n-1}\left(\frac{1}{\sigma^2_{{\bf w}_{n}}}+\frac{1}{\Sigma^*_{n|n-1}}\right)^2}\right]\geq\frac{1}{\sigma^2_{{\bf v}_{n}}\Sigma^*_{n|n-1}\left(\frac{1}{\sigma^2_{{\bf w}_{n}}}+\frac{1}{\Sigma^*_{n|n-1}}\right)^2}\equiv\theta^{\prime}_n.\label{time_Stage_n_soft_case_1}
\end{align}
{\bf Case 2:} Now assume that $\mu_{n}^*>0$. Then, from \eqref{complementary_slackness_soft} we obtain that $f_{n}^{*}=0$, which implies that
\begin{align}
\theta_{n}=\frac{1}{\sigma^2_{{\bf v}_{n}}\Sigma^*_{n|n-1}\left(\frac{\mu^*_{n}}{\sigma^2_{{\bf v}_{n}}}+\frac{1}{\sigma^2_{{\bf w}_{n}}}+\frac{1}{\Sigma^*_{n|n-1}}\right)^2}<\theta_{n}^{\prime}.\label{time_Stage_n_soft_case_2}
\end{align}
Moreover, solving \eqref{time_Stage_n_soft_case_2} w.r.t. $\mu_{n}^*$ we obtain
\begin{align}
\mu_{n}^*=\frac{\sigma^2_{{\bf v}_n}}{\sqrt{\theta_n\Sigma^*_{n|n-1}\sigma^2_{{\bf v}_n}}}-\frac{\sigma^2_{{\bf v}_n}}{\Sigma^*_{n|n-1}}\left(\frac{\Sigma^*_{n|n-1}}{\sigma^2_{{\bf w}_n}}+1\right).\label{lagrangian_time_Stage_n_case_2}
\end{align}
Hence, we proved that by optimizing forward in time, we obtain the optimal $\{\mu_t^*:~t\in\mathbb{N}_0^n\}$. The problem is solved once we replace $\mu_t^*=A_t^{2,*}$,~for $t=0,1,\ldots,n$, in \eqref{enc_optimization_soft} which leads to \eqref{enc_optimization_sol_soft}, \eqref{individual_cost_soft}, \eqref{individual_prime_cost}, \eqref{cases_optimal_A_soft} and \eqref{covariance_cost_soft}. This completes the derivation. \qed

\section{Proof of Theorem~\ref{thm:affineNash}} \label{appendixaffineNash} 
	\begin{itemize}
	\item[(i)] For the given affine encoder strategy ${\bf m}=\gamma^e({\bf x})=A{\bf x}+C$, the decoder input is
	\[ {\bf r} = (\alpha A+1-\alpha){\bf x} + \alpha {\bf v} + (1-\alpha){\bf w} + \alpha C\]
	when the decoder adjusts the time-sharing parameter $\alpha$ of the channels. Then, similar to Theorem~\ref{thm:affineDecoder}, the optimal decoder strategy is $\gamma^d({\bf r})=\hat{{\bf x}}={\bf E}[{\bf x}|{\bf r}]$. For $A>0$, it can be expressed as
	\begin{align}
	\gamma^d({\bf r}) = {A\sigma_{\bf x}^2\sigma_{\bf w}^2 + \sigma_{\bf x}^2\sigma_{\bf v}^2\over A^2\sigma_{\bf x}^2\sigma_{\bf w}^2+\sigma_{\bf x}^2\sigma_{\bf v}^2+\sigma_{\bf w}^2\sigma_{\bf v}^2}({\bf r}-\alpha C) 
	\label{eq:optDecoderEstNash}
	\end{align}
	with the channel combining parameter $\alpha = {A\sigma_{\bf w}^2 \over A\sigma_{\bf w}^2+\sigma_{\bf v}^2}$.\newline
	For $-\sqrt{\sigma_{\bf v}^2\over\sigma_{\bf w}^2}\leq A\leq0$, we have $\gamma^d({\bf r})={\sigma_{\bf x}^2 \over \sigma_{\bf x}^2+\sigma_{\bf w}^2}{\bf r}$ and $\alpha=0$. \newline
	For $A\leq-\sqrt{\sigma_{\bf v}^2\over\sigma_{\bf w}^2}$, we have $\gamma^d({\bf r})={A\sigma_{\bf x}^2 \over A^2\sigma_{\bf x}^2+\sigma_{\bf v}^2}({\bf r}- C)$ and $\alpha=1$.
	\item[(ii)] For the given affine decoder strategy $\hat{{\bf x}}=\gamma^d({\bf r})=K{\bf r}+L$ and the nonzero channel combining parameter $\alpha$, since  ${\bf r} = \alpha(\gamma^e({\bf x})+{\bf v}) + (1-\alpha)({\bf x}+{\bf w})$, we have $\hat{{\bf x}}=\alpha K \gamma^e({\bf x}) + (1-\alpha)K{\bf x} + \alpha K{\bf v} + (1-\alpha)K{\bf w}+L$. Then, the corresponding encoder cost is
	\begin{align*}
	J^e &= {\bf E}[({\bf x}-\hat{{\bf x}}-b)^2]+\theta{\bf E}[(\gamma^e({\bf x}))^2] \nn\\
	&= {\bf E}\big[(-\alpha K \gamma^e({\bf x})+(1-(1-\alpha)K){\bf x} -L-b)^2 + \theta(\gamma^e({\bf x}))^2\big] + \alpha^2K^2\sigma_{\bf v}^2 + (1-\alpha)^2K^2\sigma_{\bf w}^2 \nn\\
	&= {\bf E}\big[(\alpha^2K^2+\theta)(\gamma^e({\bf x}))^2 - 2 \alpha K((1-(1-\alpha)K){\bf x} -L-b)\gamma^e({\bf x}) + ((1-(1-\alpha)K){\bf x} -L-b)^2\big] \nn\\
	&\qquad\qquad+ \alpha^2K^2\sigma_{\bf v}^2 + (1-\alpha)^2K^2\sigma_{\bf w}^2 \\
	&= (\alpha^2K^2+\theta) {\bf E}\Bigg[\left(\gamma^e({\bf x})-{\alpha K((1-(1-\alpha)K){\bf x} -L-b)\over\alpha^2K^2+\theta}\right)^2 - \left({\alpha K((1-(1-\alpha)K){\bf x} -L-b)\over\alpha^2K^2+\theta}\right)^2\Bigg] \nn\\
	&\qquad\qquad+ (1-(1-\alpha)K)^2\sigma_{\bf x}^2+(L+b)^2+\alpha^2K^2\sigma_{\bf v}^2 + (1-\alpha)^2K^2\sigma_{\bf w}^2 \nn\\
	&= (\alpha^2K^2+\theta) {\bf E}\Bigg[\left(\gamma^e({\bf x})-{\alpha K((1-(1-\alpha)K){\bf x} -L-b)\over\alpha^2K^2+\theta}\right)^2\Bigg] \nn\\
	&\qquad\qquad+ \theta\left({(1-(1-\alpha)K)^2\sigma_{\bf x}^2+(L+b)^2\over\alpha^2K^2+\theta}\right) +\alpha^2K^2\sigma_{\bf v}^2 + (1-\alpha)^2K^2\sigma_{\bf w}^2\,.
	\end{align*}
	Thus, the optimal encoder strategy that minimizes the encoder cost is 
	\begin{align}
	\gamma^e({\bf x}) = {\alpha K((1-(1-\alpha)K){\bf x} -L-b)\over\alpha^2K^2+\theta} \,.
	\label{eq:optEncoderSoftNash}
	\end{align}	
	\item[(iii)] In order to have an affine Nash equilibrium, the best responses of the encoder and the decoder must match each other. In particular, for $A>0$,  \eqref{eq:optDecoderEstNash} and \eqref{eq:optEncoderSoftNash}  must be simultaneously satisfied:
	\begin{align*}
	A &= {\alpha K(1-(1-\alpha)K)\over\alpha^2K^2+\theta} \,,\quad C=-{\alpha K(L+b)\over\alpha^2K^2+\theta}\,, \quad
	K = {A\sigma_{\bf x}^2\sigma_{\bf w}^2 + \sigma_{\bf x}^2\sigma_{\bf v}^2\over A^2\sigma_{\bf x}^2\sigma_{\bf w}^2+\sigma_{\bf x}^2\sigma_{\bf v}^2+\sigma_{\bf w}^2\sigma_{\bf v}^2} \,,\quad L=-\alpha KC \,,\quad
	\alpha = {A\sigma_{\bf w}^2 \over A\sigma_{\bf w}^2+\sigma_{\bf v}^2} \,.
	\end{align*} 
	Notice the following:
	\begin{align}
	A &= {\alpha K(1-(1-\alpha)K)\over\alpha^2K^2+\theta} = 
	{{A\sigma_{\bf w}^2 \over A^2\sigma_{\bf w}^2+\sigma_{\bf v}^2+{\sigma_{\bf w}^2\sigma_{\bf v}^2\over\sigma_{\bf x}^2}}\left({1-{\sigma_{\bf v}^2 \over A^2\sigma_{\bf w}^2+\sigma_{\bf v}^2+{\sigma_{\bf w}^2\sigma_{\bf v}^2\over\sigma_{\bf x}^2}}}\right) \over \left({A\sigma_{\bf w}^2 \over A^2\sigma_{\bf w}^2+\sigma_{\bf v}^2+{\sigma_{\bf w}^2\sigma_{\bf v}^2\over\sigma_{\bf x}^2}}\right)^2+\theta} = {A\sigma_{\bf w}^2\left(A^2\sigma_{\bf w}^2+{\sigma_{\bf w}^2\sigma_{\bf v}^2\over\sigma_{\bf x}^2}\right) \over A^2\sigma_{\bf w}^4 + \theta\left({ A^2\sigma_{\bf w}^2+\sigma_{\bf v}^2+{\sigma_{\bf w}^2\sigma_{\bf v}^2\over\sigma_{\bf x}^2}}\right)^2} \nn\\ 
	&\Rightarrow \theta\left({ A^2\sigma_{\bf w}^2+\sigma_{\bf v}^2+{\sigma_{\bf w}^2\sigma_{\bf v}^2\over\sigma_{\bf x}^2}}\right)^2 = {\sigma_{\bf w}^4\sigma_{\bf v}^2\over\sigma_{\bf x}^2} \nn\\
	&\Rightarrow  \left({ A^2+{\sigma_{\bf v}^2\over\sigma_{\bf w}^2}+{\sigma_{\bf v}^2\over\sigma_{\bf x}^2}}\right)^2 = {1\over\theta}{\sigma_{\bf v}^2\over\sigma_{\bf x}^2} \nn\\
	&\Rightarrow A = \sqrt{\sqrt{{1\over\theta}{\sigma_{\bf v}^2\over\sigma_{\bf x}^2}} - {\sigma_{\bf v}^2\over\sigma_{\bf w}^2}-{\sigma_{\bf v}^2\over\sigma_{\bf x}^2}}\,. \label{eq:nashSoftEnc}
	\end{align}
	Then, by utilizing \eqref{eq:nashSoftEnc}, $K$ and $\alpha$ can be decided correspondingly. In order to have a valid encoder strategy, i.e., $A>0$, it must be satisfied that
	\begin{align*}
	\sqrt{{1\over\theta}{\sigma_{\bf v}^2\over\sigma_{\bf x}^2}} &- {\sigma_{\bf v}^2\over\sigma_{\bf w}^2}-{\sigma_{\bf v}^2\over\sigma_{\bf x}^2} > 0 \Rightarrow {1\over\theta}{\sigma_{\bf v}^2\over\sigma_{\bf x}^2} > \left({\sigma_{\bf v}^2\over\sigma_{\bf w}^2}+{\sigma_{\bf v}^2\over\sigma_{\bf x}^2}\right)^2 \Rightarrow \theta < {{\sigma_{\bf v}^2\over\sigma_{\bf x}^2} \over \left({\sigma_{\bf v}^2\over\sigma_{\bf w}^2}+{\sigma_{\bf v}^2\over\sigma_{\bf x}^2}\right)^2} = {{\sigma_{\bf x}^2\over\sigma_{\bf v}^2} \over \left({\sigma_{\bf x}^2\over\sigma_{\bf w}^2}+1\right)^2} \,.
	\end{align*}
	Thus, the linear part of the strategies (i.e., $A$ and $K$) construct consistent equations. Regarding the translation parts, observe the following:
	\begin{align*}
	L &= -\alpha CK = \alpha {\alpha K(L+b)\over\alpha^2K^2+\theta}K = {\alpha^2 K^2(L+b)\over\alpha^2K^2+\theta} \Rightarrow L \left(1-{\alpha^2K^2\over\alpha^2K^2+\theta}\right)={\alpha^2 K^2b\over\alpha^2K^2+\theta} \nn\\
	&\Rightarrow L={\alpha^2 K^2b\over\theta} \Rightarrow C=-{\alpha Kb\over\theta} \,.
	\end{align*} 
	As a result, when $\theta < {{\sigma_{\bf x}^2\over\sigma_{\bf v}^2} \over \left({\sigma_{\bf x}^2\over\sigma_{\bf w}^2}+1\right)^2}$, the jointly affine encoder and decoder strategies $\gamma^e({\bf x})=A{\bf x}+C$ and $\gamma^d({\bf r})=K{\bf r}+L$ and the channel combining parameter $\alpha$ form a Nash equilibrium.  

	Now consider the case when $A\leq-\sqrt{\sigma_{\bf v}^2\over\sigma_{\bf w}^2}$, which implies the following
	must be simultaneously satisfied:
	\begin{align*}
	A &= {\alpha K(1-(1-\alpha)K)\over\alpha^2K^2+\theta} \,,\quad C=-{\alpha K(L+b)\over\alpha^2K^2+\theta}\,,\quad
	K = {A\sigma_{\bf x}^2 \over A^2\sigma_{\bf x}^2+\sigma_{\bf v}^2} \,,\quad L=-KC \,,\quad \alpha = 1 \,.
	\end{align*} 
	Notice the following:
	\begin{align*}
	AK &= {K^2\over K^2+\theta}={A^2\sigma_{\bf x}^2 \over A^2\sigma_{\bf x}^2+\sigma_{\bf v}^2} \Rightarrow {\theta\over K^2+\theta}={\sigma_{\bf v}^2 \over A^2\sigma_{\bf x}^2+\sigma_{\bf v}^2}={\sigma_{\bf v}^2 \over {A\sigma_{\bf x}^2\over K}}={\sigma_{\bf v}^2 \over {\sigma_{\bf x}^2\over K^2+\theta}} \nn\\
	&\Rightarrow (K^2+\theta)^2 = {\theta\sigma_{\bf x}^2\over \sigma_{\bf v}^2} \Rightarrow K = \pm \sqrt{\sqrt{\theta\sigma_{\bf x}^2\over \sigma_{\bf v}^2}-\theta} \Rightarrow A = \pm \sqrt{\sqrt{\sigma_{\bf v}^2\over\theta\sigma_{\bf x}^2 }-{\sigma_{\bf v}^2\over\sigma_{\bf x}^2 }}\,.
	\end{align*}
	Note that in order to have valid strategies, it must hold that $\sqrt{\theta\sigma_{\bf x}^2\over \sigma_{\bf v}^2}-\theta >0 \Rightarrow \theta<{\sigma_{\bf x}^2\over \sigma_{\bf v}^2}$. Due to the assumption, we have $\theta<{{\sigma_{\bf x}^2\over\sigma_{\bf v}^2} \over \left({\sigma_{\bf x}^2\over\sigma_{\bf w}^2}+1\right)^2}<{\sigma_{\bf x}^2\over \sigma_{\bf v}^2}$, which satisfies the validity of strategies. Furthermore, we must also have $A\leq-\sqrt{\sigma_{\bf v}^2\over\sigma_{\bf w}^2}$, thus the negative solution of $A$ (which also implies the negative solution of $K$) will be preferred. In particular, the following must hold:
	\begin{align*}
	A &= - \sqrt{\sqrt{\sigma_{\bf v}^2\over\theta\sigma_{\bf x}^2 }-{\sigma_{\bf v}^2\over\sigma_{\bf x}^2 }} \leq-\sqrt{\sigma_{\bf v}^2\over\sigma_{\bf w}^2} \Rightarrow \sqrt{\sigma_{\bf v}^2\over\theta\sigma_{\bf x}^2 }-{\sigma_{\bf v}^2\over\sigma_{\bf x}^2} \geq {\sigma_{\bf v}^2\over\sigma_{\bf w}^2} \Rightarrow \theta \leq {{\sigma_{\bf v}^2\over\sigma_{\bf x}^2} \over {\sigma_{\bf v}^2\over\sigma_{\bf w}^2}+{\sigma_{\bf v}^2\over\sigma_{\bf x}^2}} \,,
	\end{align*}
	which is satisfied by the assumption. Thus, the linear parts of the strategies (i.e., $A$ and $K$) construct consistent equations. Regarding the translation parts, observe the following:
	\begin{align*}
	L&=- KC = {K^2(L+b)\over K^2+\theta} \Rightarrow L={ K^2b\over\theta} \Rightarrow C=-{ Kb\over\theta} \,.
	\end{align*} 

	If $-\sqrt{\sigma_{\bf v}^2\over\sigma_{\bf w}^2}\leq A\leq0$ holds, then the decoder does not utilize any information from the encoder, which implies the following
	must be simultaneously satisfied:
	\begin{align*}
	A &= {\alpha K(1-(1-\alpha)K)\over\alpha^2K^2+\theta} \,,\quad C=-{\alpha K(L+b)\over\alpha^2K^2+\theta}\,,\quad
	K = {\sigma_{\bf x}^2 \over \sigma_{\bf x}^2+\sigma_{\bf w}^2} \,,\quad L=0 \,,\quad \alpha = 0 \,.
	\end{align*} 
	Thus, $A=C=0$ is obtained. Note that, in this particular case, since the encoder has no effect on the estimation performance of the decoder, the encoder prefers not to transmit any message to minimize its cost (by avoiding transmission cost).
\end{itemize}
This completes the derivation. \qed


%



\bibliographystyle{IEEEtran}
\bibliography{isit2021Bibliography}

%
%
%
%
%
%
%

\end{document}